\def\ps@pprintTitle{%
 \let\@oddhead\@empty
 \let\@evenhead\@empty
 \def\@oddfoot{\centerline{\thepage}}%
 \let\@evenfoot\@oddfoot}
 \newcommand{\grad}{\triangledown}
\theoremstyle{plain}
\newtheorem{thm}{Theorem}[section]
\theoremstyle{plain}
\newtheorem{lem}[thm]{Lemma}
\theoremstyle{definition}
\newtheorem{defi}{Definition}[section]
\newtheorem{rem}{Remark}[section]
\newtheorem*{maintheorem*}{Main Theorem}
\newtheorem*{maincorollary*}{Main Corollary}
\newenvironment{Assumptions}
{%
\setcounter{enumi}{0}

\begin{enumerate}}%
{\end{enumerate} }
\newcommand{\R}{\ensuremath{\mathbb{R}}}
\newcommand{\gi}{\textfrak{i}}
\newcommand{\m}{\mathbf{m}}
\newcommand{\bj}{\mathbf{j}}
\newcommand{\bk}{\mathbf{k}}
\newcommand{\Eta}{\eta^{\alpha,\beta}}
\newcommand{\rn}{\mathbb{R}^{N}}
\newcommand{\zn}{\mathbb{Z}^{N}}
	\newcommand{\Z}{\mathbb{Z}}
\newcommand{\A}{\alpha}
\newcommand{\B}{\beta}
\newcommand{\I}{\mathcal{I}}
\newcommand{\J}{\tilde{\mathcal{J}}}
\numberwithin{equation}{section} \allowdisplaybreaks
\journal{preprint}
\begin{document}

\begin{frontmatter}

\title{On the rate of convergence for monotone numerical schemes for nonlocal Isaacs' equations}

\author[Imran H. Biswas]{Imran H. Biswas}
\address[Imran H. Biswas]{
 Centre for Applicable Mathematics,
 Tata Instiute of Fundamental Research,
  P.O.\ Box 6503,
  Bangalore 560065, India}

\author[Imran H. Biswas]{Indranil Chowdhury}

\author[Espen R. Jakobsen]{Espen R. Jakobsen}
\address[Espen R. Jakobsen]{
Norwegian University of Science and Technology, NO--7491, Trondheim,
Norway}

\begin{abstract}
We study monotone numerical schemes for nonlocal Isaacs equations, the
dynamic programming equations of stochastic differential games 
with jump-diffusion state processes. These equations are
fully-nonlinear non-convex equations of order less
than $2$. In this paper they are also allowed to be degenerate 
and have non-smooth solutions. The main contribution is a
series of new a priori error estimates:
The first results 
for {\em nonlocal} Isaacs equations, the first
  general results for {\em degenerate} non-convex equations of order
  greater than $1$, and the first results  in the viscosity solution
  setting giving the {\em precise dependence} on the fractional order of the
  equation. We also observe a new phenomena,
    that the rates differ when the nonlocal diffusion coefficient depend on
    $x$ and $t$, only on $x$, or on neither. 
\end{abstract}

\begin{keyword}
Fractional and nonlocal equations, Isaacs equations,  stochastic differential games, error estimate, viscosity solution, monotone scheme, rate of convergence.

\MSC[2010]{45K05, 46S50, 49L20, 49L25, 91A23, 93E20}

\end{keyword}

\end{frontmatter}


\section{Introduction}

 In this paper we obtain error estimates for monotone approximation
 schemes for nonlocal Isaacs-Bellman equations originating from
 optimal stochastic control and differential game theory:
 \begin{align}\label{eqn:main}
 & u_t + \inf_{\A\in \mathcal{A}}\sup_{\B\in\mathcal{B}} \left\{ -f^{\A,\B}(t,x) +c^{\A,\B}(t,x) u(t,x) - b^{\A,\B}(t,x) \cdot \grad u(t,x) - \I^{\A,\B}[u](t,x) \right\}  = 0  \mbox{\quad in} \ Q_T, \\
\label{int-cond}&u(0,x)  = u_0(x)  \mbox{\quad in} ~ \rn,   
 \end{align}where $\mathcal{I}^{\alpha,\beta}$ is a nonlocal operator defined by
\begin{align} \label{n_local:term}
\I^{\A,\B}[\phi](t,x) := \int_{\mathbb{R}^M\backslash\{0\}} \Big( \phi(t,x+\eta^{\A,\B}(t,x; z)) - \phi(t, x) -\eta^{\A,\B}(t,x;z)\cdot \nabla_x\phi(t,x)\Big) \, \nu(dz)
\end{align}
for smooth bounded functions $\phi$. Here  $Q_T := (0,T] \times \rn $,  $\mathcal{A}$  and
   $\mathcal{B}$ are metric spaces, $f^{\A,\B},c^{\A,\B},b^{\A,\B},\eta^{\A,\B}$ are  $ \R , \R, \rn$, and $ \rn $
 valued functions respectively, while the L\'{e}vy measure $\nu$ is a
 nonnegative Radon measure satisfying the L\'evy integrability assumption
 \ref{A4} in Section \ref{preliminary}.

The diffusion part of this equation ($\mathcal
I^{\alpha,\beta}$) is purely nonlocal, and under the assumptions of Section \ref{preliminary},
$\mathcal{I}^{\alpha,\beta}$ is a non-positive 
fractional differential operator of order $\sigma\in[0,2)$. The
  fractional Laplacian $-(-\Delta)^{\frac\sigma2}$ is not
  covered, but all similar operators coming from tempered or truncated
  processes are. In particular almost all non-local operators
  appearing in finance are included \cite{CTbook}.
 In general 
  this equation is a fully nonlinear, non-convex, nonlocal PDE (an
  integro-PDE) that may have any order $\sigma\in[0,2)$. In particular,
    it may have order greater than one. Moreover, since we
  also allow the equations to be degenerate, solutions are typically not
  smooth. Under Lipschitz type regularity assumptions on the 
  coefficients and data, the problems are
  well-posed in the viscosity solution sense \cite{CIL} having 
  merely H\"older or Lipschitz continuous solutions. First and fractional
  derivatives need not exist. The precise assumptions and results can
  be found in Section \ref{preliminary}.
  The literature on viscosity solutions and nonlocal PDEs is by now very
  large,
  but the results we will need here are mainly covered by
  \cite{Jakobsen:2005jy, BI:P07} and the references therein.

The study of Isaacs and Bellman equations is
primarily motivated by their connection with the theory of stochastic
differential games and stochastic control. Equations of the form
\eqref{eqn:main} appear  when the state dynamics is given by a controlled L\'{e}vy-It\^{o} type SDE driven by
a pure-jump L\'{e}vy process. By the dynamic programming principle (DPP), the value
functions of such games satisfy nonlocal PDEs of the form
\eqref{eqn:main}. These equations are called the Isaacs or DPP equations
for the differential games.  We refer to
\cite{Biswas2012,Evans:1984fv,Fleming:1989cg} for more on
differential games and dynamic programming equations. Note that
if $\eta^{\A, \B}\equiv 0$ or $\nu\equiv0$, then there is no diffusion
and \eqref{eqn:main} becomes the widely studied first order Isaacs equation
corresponding to a deterministic game (see e.g. \cite{Evans:1984fv}).
If the state process is driven by a Brownian motion, 
 then the related DPP equation is a second order PDE
 (cf. \cite{Fleming:1989cg}). This case will not be considered here.



 The numerical approximations we consider here are monotone finite-difference
 quadrature methods in the spirit of e.g. \cite{BJK1}. We refer to
 \eqref{eq:the scheme} in Section \ref{sec:main} for the precise form
 of these approximations. 
  The main contribution of this paper is a series of new and very accurate
error estimates in this setting. 
If solutions are Lipschitz continuous, then 
these estimates may take the form 
\begin{align}
\label{estim1}\left\|U - u \right\|_{L^\infty(Q_T)} \leq C_T\begin{cases}
   \Delta t^{\frac{1}{2}}+  \Delta x^{\frac12} & \text{if}\ \sigma\in[0,1),\\
   \Delta t^{\frac{1}{2}}+  \Delta x^{\frac12}|\ln\Delta x| & \text{if}\ \sigma=1,\\
   \Delta t^{\frac{1}{2}}+  \Delta x^{\frac{2-\sigma}{2\sigma}}& \text{if}\ \sigma\in(1,2),
  \end{cases}
\end{align}
where $\sigma\in[0,2)$ is the order of the nonlocal term, and $\Delta
t>0,\Delta x>0$ are time and space grid parameters.
    In general solutions are only
  H\"older continuous in time, and then also the rates in time may depend on
  $\sigma$.
Surprisingly, we also discover a
  new phenomenon. When $\sigma\in(1,2)$, the convergence rates differ
  depending on whether $\eta$ depends on $(x,t)$, only on $x$, or on
  neither! We find in Remark \ref{optd-rem} that 
 \begin{align}\label{estim2}
 \left\|U - u \right\|_{L^\infty(Q_T)} \leq C
\left\{  
		\begin{array}{ll}
		 (\Delta t)^{\frac{2-\sigma}{2\sigma}} + ( \Delta
                  x)^{\frac{2-\sigma}{2\sigma}} & \quad \mbox{when
                    $\eta$ depends on $x,t$}, \\[0.2cm]
		 (\Delta t)^{\frac{1}{2\sigma}} + ( \Delta
                  x)^{\frac{2-\sigma}{2\sigma}} & \quad \mbox{when
                    $\eta$ only depends on $x$}, \\[0.2cm]
		 (\Delta t)^{\frac{1}{2\sigma}} + ( \Delta
                  x)^{\frac{2-\sigma}{2}} & \quad  \mbox{when 
                    $\eta$ does not depend on $x,t$}.
		\end{array}
\right.
 \end{align}
Precise statements and results are given in Section \ref{sec:main}.

   The study of numerical approximation in the context of viscosity
   solutions began in the early eighties with pioneering papers of
   Lions, Crandall and others. In some of the early papers
   \cite{CDF, CL:2approx, Lions:2010hs, souganidis1985}, the authors 
   obtained a priori error estimates for consistent  monotone  schemes for
   first order HJB equations. These results are  derived through
   suitable modifications of the viscosity solution uniqueness proofs for
   the corresponding equations. These arguments can not be extended to
   2nd order equations, and it took more than a decade before a
   solution was found by N. V. Krylov. In a series of
   articles \cite{Krylov:1997nf,Krylov:2000yy,Krylov:2005lj},
   Krylov introduced the method of shaking the coefficients and was
   able to establish error estimates for a class of
   monotone schemes for convex second order HJB equations. These
   results were then extended and complemented by Barles $\&$ Jakobsen
   in \cite{BJ2002, BJ2005, Barles:2006jf}. In all of these papers, and
   the many others building upon them,
   convexity and a type of Jensen's inequality is crucial.

   For non-convex equations like the Isaacs equation, there are no
   general results giving error estimates for numerical
   methods. However, in special cases there are some results: In one
   space dimension \cite{J2002}, for special types of non-convex
   equations \cite{Zidani:2006, J2006}, and for uniformly elliptic/parabolic
   equations \cite{Caffarelii:2008gf,krylov:2015,T2015,T2015a}.
In the two first cases the proofs rely on the special structure of the
problems (one dimension and not too non-convex) and are not suitable
for general equations/dimensions, while in the last case it relies on
some type of elliptic regularity. This last direction of research was
initiated by Cafferelli and Souganidis in \cite{Caffarelii:2008gf} (but
see also \cite{krylov:2015}), where they obtain an (unknown) algebraic rate
of convergence for equations with rather general non-convex nonlinearities.  
In spite of all these results, it seems that the problem is very far
from understood in the case of general, possibly degenerate, Isaacs equations.
     
     The story of nonlocal Bellman-Isaacs equations is a more recent
     one and there is already a significant literature
     addressing the issues of numerical approximations and the
     related error analysis.  Most of the development in this
     direction have taken place in the last ten years,  see
     e.g. \cite{BJK2,BJK1,JKL08} for general error estimates for 
     convex and nonlocal HJB equations. These results are extensions
     of the results for local 2nd order equations
     (Krylov-Barles-Jakobsen type theory) and convexity is again
     crucial. For non-convex nonlocal problems there are no
     results on error estimates as far as we know.

     At this point, we note that convexity is not playing any role in
     the proof of the error estimates for first order equations. But,
     as we have already mentioned, these techniques do not work for 2nd order
     problems. However, for a different class of equations and weak
     solution concept (nonlinear convection-diffusion equations and entropy
     solutions), it was noticed in \cite{CJ2011} that first  
     order error estimation techniques surprisingly could work also for
     nonlocal/fractional 
     problems of any order less than 2. At least for certain natural numerical
     approximations. Is it possible to do similar things also for the
     nonlocal Isaacs equations \eqref{eqn:main} and in a viscosity
     solution setting? 
     The goal of this paper is to investigate if, and
     to what extent, we can extend first order error estimation
     techniques to nonlocal Isaacs equations \eqref{eqn:main} of
     any order less than two.

Because of the nonlocal term, the analysis necessarily becomes much
more involved than in the first order case, and it leads (as usual) to
3 different cases: (i) The supercritical case where
$\sigma\in[0,1)$ and drift/convection dominates, (ii) the critical
  case $\sigma=1$ where drift and diffusion is in balance, and (iii)
  the subcritical case where $\sigma\in(1,2)$ and diffusion
  dominates. In this paper we give precise and rigorous error
  estimates in all cases, cf. e.g. \eqref{estim1} and
  \eqref{estim2}. In case (i) we get the same (and hence the optimal)
  rate as for first 
  order equations \cite{CDF, CL:2approx, souganidis1985}.  In case (ii)
  we get a rate with a logarithm, and in case (iii) we find a rate
  depending on $\sigma$. Under certain conditions these rates are
  consistent with the rates in \cite{CJ2011}.
  Note that the rates go to $0$ when $\sigma\to2$. This behaviour is
  correct and is an artifact of the  
  numerical method. Under our low regularity assumptions, these
  results are the best possible results for this method.
In case (iii) (cf. \eqref{estim2}) we also observe that the rates differ according to whether $\eta$
  depend on $x$ and $t$, only on $x$, or on neither of them.  This is
    a new phenomenon that is not present for local equations.
To summarize, the main novelties of this paper are:
\begin{enumerate}
\item The first error estimates for numerical schemes for {\em
  nonlocal} Isaacs equations.
\item The first error bounds for  general {\em degenerate} non-convex
  equations of order greater than $1$.
\item The first error bounds for a numerical scheme in the viscosity solution setting giving the
  {\em precise dependence} of the order $\sigma$ of the nonlocal term.
\item The first error bounds where the rates depend on whether the
  jump term $\eta$  depend on $(x,t)$, only on $x$, or on neither. 
\end{enumerate}
As a part of our effort to get precise estimates
correctly reflecting the fractional order $\sigma$ of the nonlocal
term, we also prove a new and refined time regularity result for
viscosity solutions.


  The rest of the paper is organised as follows. 
       In Section
       \ref{preliminary}, we list the assumptions and state the
       wellposedness result and a priori estimates for
       \eqref{eqn:main}--\eqref{int-cond}, including the new and more
       accurate time regularity result. In Section
       \ref{sec:main}, we introduce the schemes, establish properties
       such as wellposedness, consistency, monotonicity and stability, and 
       state our main results, the error estimates. The proof of the
       these estimates are given in Section
       \ref{proof}. In Section
         \ref{sec:higher-order}, the last
       section of the paper, we explain how our techniques can be used
       to obtain error estimates for a larger class of monotone
       approximations of \eqref{eqn:main}. But this extension comes at
       a price, the rates for more accurate schemes will be suboptimal.
  
\section{Preliminaries } \label{preliminary}
In this section we state our main assumptions, define the relevant
concept of solutions -- viscosity solutions, and state and partially
prove a wellposedness result for \eqref{eqn:main}-\eqref{int-cond}.
We start with some notation. By $C, K$ we mean various constants which
may change from line to line. 
The Euclidean norm on any $\mathbb{R}^d$-type space is
denoted by $|\cdot|$. For any subset $Q\subset \mathbb{R}\times
\mathbb{R}^N$ and for any bounded, possibly vector valued,
function on $Q$, we define the following norms,
\begin{align*}
	&\|w\|_0 := \sup_{(t,x)\in Q} |w(t,x)|,\\
	&\|w\|_{1} := \|w\|_0 + \sup_{(t,x)\neq(s,y)}
	\frac{|w(t,x)-w(s,y)|}{|t-s|+|x-y|}. 
\end{align*}
Note that if $w$ is independent of $t$, then $\|w\|_1$ is the
Lipschitz (or $W^{1,\infty}$) norm of $w$. We use $C_b(Q)$ to denote the
space of bounded continuous real valued functions on $Q$.
We use the notation $h$ to denote the vector $(\Delta t, \Delta x )$ involving the mesh parameters,
and any dependence on $\Delta t$, $\Delta x$ will be denoted by
subscript $h$. The grid is denoted by $\mathcal G_h$ and is a subset
of $\bar Q_T$ which need not be uniform or even discrete in
general. We also set $\mathcal G_h^0=\mathcal G_h\cap\{t=0\}$
and $\mathcal G_h^+=\mathcal G_h\cap\{t>0\}$.

       We now list the working assumptions of this paper.
       These  are sufficient for the wellposedness and regularity
       results for \eqref{eqn:main}--\eqref{int-cond}.

 \begin{Assumptions}
\item\label{A1} 
 The sets $\mathcal{A} , \mathcal{B}$ are separable metric
 spaces, $c^{\A,\B}(t,x) \geq0$, and $c^{\A,\B}(t,x),f^{\A,\B}(t,x),b^{\A,\B}(t,x)$ and\\ $\eta^{\alpha, \beta}(t,x;z)$ are continuous in $\A$, $\B$, $t,x$  and $z$. 
\item \label{A2} There exists a constant $K>0$ such that for every $\A,\B$ , 
$$ \|u_0\|_{1} + \|f^{\A,\B}\|_{1}+ \|c^{\A,\B}\|_{1}+ \|b^{\A,\B}\|_{1} \leq K . $$

\item \label{A3}  For $x,y \in \rn$, $\A\in \mathcal{A},\B \in
  \mathcal{B}$ and $z\in \mathbb{R}^M $, there is a function
  $\rho(z)\geq0$ such that
\begin{align*}
|\Eta(t,x;z) - \Eta(s,y; z)| \leq  \rho(z)\,(|x-y|+ |t-s|) \quad \mbox{and} \quad |\Eta(t,x;z)| \leq \rho(z)
\end{align*}
and 
$$|\rho(z)|\leq K|z|\quad \text{for}\quad |z|<1\qquad\text{and}\qquad
1\leq  \rho(z) \leq \rho(z)^2\quad \text{for} \quad |z|>1.$$
  
  \item\label{A4} The L\'{e}vy measure $\nu$  is a nonnegative Radon measure on $\big(\R^M, \mathcal{B}(\R^M)\big)$ satisfying
  \begin{align*}
   \int_{|z|<1} |z|^2\nu(\,dz)+ \int_{|z|>1} \rho(z)^2\nu(\,dz) < \infty. 
  \end{align*}

\item\label{A5} There is a $\sigma \in (0, 2)$, a constant $C>0$, and
  density $k(z)$ of $\nu(dz)$ for $|z|<1$ satisfying
  \begin{align*}
  0\le k(z) \le \frac{C}{|z|^{M+\sigma}}\qquad\text{for}\qquad|z|<1.
\end{align*} 

\end{Assumptions}

\begin{rem}
(a) Typical examples are $\eta=\bar\eta(x)z$ and $\eta=\bar\eta(x)(e^z-1)$,
and for $\nu$,
$$\nu(dz)=\frac{c_\sigma
  e^{-K|z|}dz}{|z|^{N+\sigma}}\quad\text{and} \quad \nu(dz)=1_{|z|<1}\frac{c_\sigma
 dz}{|z|^{N+\sigma}}$$ 
for $\sigma\in(0,2)$, i.e. tempered or truncated $\sigma$-stable
L\'{e}vy measures.  Near $z=0$ these L\'{e}vy measures behave as the L\'{e}vy measure
associated to the fractional Laplacian $(-\Delta)^{\sigma/2}$, and
their (pseudo-differential) orders is $\sigma$ as it is for
$(-\Delta)^{\sigma/2}$. We will see that we get different estimates
when $\sigma<1$, $\sigma=1$, or $\sigma>1$.

\smallskip

\noindent(b) Assumptions \ref{A3}, \ref{A4}, and \ref{A5}
are quite general and encompass most models from finance
\cite{CTbook}, and under \ref{A3} and \ref{A4} there is a standard
viscosity solution theory for \eqref{eqn:main}. Note that assumption
\ref{A5} only requires an upper bound on the density. This bound is
needed to get an explicit convergence rate. 
\smallskip

\noindent (c) All assumptions can be relaxed in such a way that our
techniques and results would still apply: \ref{A3} and 
\ref{A5} can be replaced by more general integral conditions like
$\int |\eta(t,x;z)-\eta(s,y;z)|^2\nu(dz)\leq 
L(|x-y|+|t-s|)$, $\int_{|z|<r}|\eta(t,x;z)|^2\nu(dz)\leq
Kr^{2-\sigma}$, etc., and \ref{A4} can be relaxed when it comes to the
integrability at infinity and absolute continuity. This is somehow
straight forward, but we omit it since it would obscure the message
and make the paper much longer and more technical.
\end{rem}  

 We now give the definition of viscosity solution for
  \eqref{eqn:main}-\eqref{int-cond}. To this end, we define
\begin{align} 
\I^{\A,\B}_{\kappa}[\phi](t,x) = \int_{B(0,\kappa)} \left(\phi(t, x + \eta^{\A,\B}(t, x;z))- \phi(t,x)-\eta^{\A,\B}(t,x;z)\cdot\nabla_x \phi(t,x)   \right) \ \nu(dz) \ , \notag \\
 \label{viscosity:term2}
\I^{\A,\B,\kappa}[u; p](t,x) = \int_{\R^M \setminus B(0,\kappa)} \left(u(t, x + \eta^{\A,\B}(t,x;z))- u(t,x) -\eta^{\A,\B}(t,x;z)\cdot p  \right) \ \ \nu(dz)\ , 
\end{align}for $(\A, \B)\in \mathcal{A}\times \mathcal{B}$, $\kappa\in
(0,1)$, $\phi\in C^2$, and bounded semicontinuous functions
$u$. By \ref{A3}--\ref{A4}, $\I^{\A,\B,\kappa}[u; p]$ and
$\I^{\A,\B}_{\kappa}[\phi]$  are well-defined, in the first case since
$\int_{|z|>\kappa}\nu(dz)<\infty$  and in the second case since
$$|\I^{\A,\B}_{\kappa}[\phi](x,t)|\leq
\frac12\|D^2\phi(\cdot,t)\|_{L^\infty(B(x,\kappa))}
\int_{|z|<\kappa}K^2\rho(z)^2\nu(dz)<\infty.
$$

\begin{defi}
(i) A function $u \in USC_b(Q_T)$ is a viscosity subsolution of
\eqref{eqn:main} if for any $k\in(0,1)$, $\phi \in C^2(Q_T)$, and
global  maximum point $(t,x)\in Q_T$ of $u - \phi$,
\begin{align*}
& \phi_t(t,x) + \inf_{\A}\sup_{\B} \Big\{ -f^{\A,\B}(t,x) +c^{\A,\B}(t,x) u(t,x) - b^{\A,\B}(t,x) . \grad \phi(t,x) \\
&\hspace{7cm}- \I^{\A,\B}_k[\phi](t,x) -\I^{\A,\B,k}[u,\nabla_x\phi(t,x)](t,x)  \Big\}  \leq 0.
\end{align*}
\noindent (ii) \quad  A function $v \in LSC_b(Q_T)$ is a viscosity supersolution of
\eqref{eqn:main} if for any $k\in(0,1)$, $\psi \in C^2(Q_T)$, global minimum point $(t,x)\in Q_T$ of $v - \psi$,
\begin{align*}
& \psi_t(t,x) + \inf_{\A}\sup_{\B} \Big\{- f^{\A,\B}(t,x) +c^{\A,\B}(t,x) v(t,x) - b^{\A,\B}(t,x) . \grad \psi(t,x) \\&\hspace{7cm}-\I^{\A,\B}_k[\psi](t,x) -\I^{\A,\B,k}[v, \nabla_x\psi(t,x)](t,x)  \Big\}  \geq 0.
\end{align*}
\noindent (iii)\quad A function $w \in C_b(Q_T)$ is a viscosity solution of \eqref{eqn:main} if it is both a sub and supersolution. 
\end{defi}

We then have the following wellposedness and Lipschitz/H\"{o}lder regularity results for
        \eqref{eqn:main}.

\begin{thm}\label{wellpos_1}
Assume \ref{A1}--\ref{A4} hold.

\smallskip
\noindent (a) If  $u$ and $v$ are respectively viscosity sub and
supersolutions of
\eqref{eqn:main} with $u(0,\cdot)\le v(0,\cdot)$, then $u\le v$.
\smallskip

\noindent (b) There exists a unique bounded viscosity solution $u$ of
the initial value problem \eqref{eqn:main}--\eqref{int-cond}.
\smallskip

  \noindent (c) There is a constant $K\geq0$ such that the solution
  $u$ from (b) satisfies for all $x,y\in\R^N$, $t,s\in[0,T]$,
\begin{align}\label{bomega}|u(x,t)-u(y,s)|\leq
  K\Big(|x-y|+\bar\omega(t-s)\Big)\qquad\text{where}\qquad \bar\omega(r):=\begin{cases}|r| &\text{if }
\sigma\in[0,1),\\ |r|(1+|\ln r|)&\text{if }
  \sigma=1,\\ |r|^{\frac1\sigma}&\text{if } \sigma\in(1,2).\end{cases}
  \end{align}
\medskip

\noindent (d) Assume in addition 
\begin{align*}
  K(u_0):= \sup_{\A,\B}\big\|{\mathcal I}^{\alpha,\beta}[u_0]
  \big\|_{L^{\infty}([0,T]\times \rn)}<\infty.
\end{align*}
\noindent   Then there is $C\geq0$ depending only on the
data \ref{A1}--\ref{A5} such that the solution $u$ from (b)
satisfies for all $x,y\in\R^N$, $t,s\in[0,T]$,
$$|u(x,t)-u(y,s)|\leq
 C\Big(|x-y|+(1+K(u_0))|t-s|\Big).$$

\end{thm}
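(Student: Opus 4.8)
The plan is to dispatch (a)--(c) with standard tools and to reserve the real work for the new estimate (d). Part (a) is the usual doubling-of-variables comparison argument for integro-PDEs: localize with $-\varepsilon t-\delta(1+|x|^2)^{1/2}$, penalize $\tfrac1{2\varepsilon}|x-y|^2$, split the nonlocal term at radius $\kappa$, absorb the small jumps via the $C^2$-regularity of the test function together with $\int_{|z|<\kappa}\rho(z)^2\nu(dz)\le C\kappa^{2-\sigma}$ from \ref{A5}, absorb the large jumps via $\int_{|z|>\kappa}\nu(dz)<\infty$ from \ref{A4}, and let $\kappa\to0$ then $\varepsilon,\delta\to0$; see \cite{Jakobsen:2005jy,BI:P07}. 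Part (b) is Perron's method (squeeze a solution between sub/supersolutions with initial data $u_0$, built from $\|u_0\|_1\le K$) plus (a) for uniqueness. For the spatial part of (c), $u(t,\cdot+e)+K|e|+C|e|t$ is a supersolution by \ref{A2}--\ref{A3}, so (a) gives $|u(t,x)-u(t,x+e)|\le C|e|$. For the time modulus, fix $x_0$ and test $u$ against the barrier $\psi(t,x)=u_0(x_0)+K\sqrt{|x-x_0|^2+\delta^2}+C_\delta t$: it lies above $u_0$ at $t=0$, is smooth with $\|D^2_x\psi\|\le K/\delta$, and a direct estimate of $\mathcal{I}^{\alpha,\beta}[\psi]$ (second-order bound on jumps of size comparable to $\delta$, Lipschitz bound on larger jumps, using \ref{A5}) shows it is a supersolution once $C_\delta\sim\delta^{1-\sigma}$ ($\sigma>1$), $\sim|\ln\delta|$ ($\sigma=1$), $\sim1$ ($\sigma<1$); comparison gives $u(t,x_0)-u_0(x_0)\le K\delta+C_\delta t$, and optimizing in $\delta$ yields the modulus $\bar\omega$ of \eqref{bomega}. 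Applying the symmetric lower bound and the same argument to the time-shifted solution $u(s+\cdot,\cdot)$ (which solves \eqref{eqn:main} with time-shifted coefficients, still obeying \ref{A1}--\ref{A5} with the same constants, and initial datum $u(s,\cdot)$ of controlled Lipschitz norm) upgrades this to $|u(t,x)-u(s,x)|\le C\bar\omega(t-s)$.

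For (d) the point is to rerun the time barrier with $u_0$ itself, which is exactly what $K(u_0)<\infty$ permits. Set $M_0:=C_0(1+K(u_0))$ with $C_0$ depending only on the data (through $K,\|u_0\|_0$), and claim $\psi^{\pm}(t,x):=u_0(x)\pm M_0t$ are, respectively, a viscosity super/subsolution of \eqref{eqn:main}. Indeed, if $\phi\in C^2(Q_T)$ touches $\psi^+$ from below at $(t_0,x_0)$ then $x\mapsto\phi(t_0,x)$ touches $u_0$ from below there up to a constant, so for every $\kappa\in(0,1)$
\begin{align*}
\mathcal{I}^{\alpha,\beta}_{\kappa}[\phi(t_0,\cdot)](x_0)+\mathcal{I}^{\alpha,\beta,\kappa}\big[u_0;\nabla_x\phi(t_0,x_0)\big](x_0)\ \le\ \mathcal{I}^{\alpha,\beta}[u_0](x_0)\ \le\ K(u_0),
\end{align*}
since the difference of the two sides equals $\int_{|z|<\kappa}\big(\phi(t_0,\cdot)-u_0\big)\big(x_0+\eta^{\alpha,\beta}(t_0,x_0;z)\big)\,\nu(dz)\le0$ (at non-differentiability points of $u_0$ one reads $\nabla_x\phi(t_0,x_0)$ as an element of the subdifferential, or first proves the claim for $u_0\in C^2$ and passes to the limit by stability). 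Feeding this into the supersolution inequality and using $c^{\alpha,\beta}\ge0$ (so the $c^{\alpha,\beta}M_0t_0$ term has the favorable sign), $\|u_0\|_1\le K$, and \ref{A2}, one gets $\phi_t(t_0,x_0)+\inf_\alpha\sup_\beta\{\cdots\}\ge M_0-C(1+K(u_0))\ge0$ for $C_0$ large; the case of $\psi^-$ is symmetric. Since $\psi^{\pm}(0,\cdot)=u_0=u(0,\cdot)$, (a) gives $|u(t,x)-u_0(x)|\le M_0t$. The passage to general $0\le s\le t\le T$ then follows by applying the same reasoning to the time-shifted problem for $u(s+\cdot,\cdot)$, provided one knows $\sup_{s\in[0,T]}\sup_{\alpha,\beta}\|\mathcal{I}^{\alpha,\beta}[u(s,\cdot)]\|_{L^\infty}\le C(1+K(u_0))$ --- a uniform-in-time propagation of the nonlocal bound, a nonlocal version of propagation of semiconcavity/semiconvexity, itself established by a comparison argument for suitable jump-second-difference quantities formed from $u$, using \ref{A3}--\ref{A5}.

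The main obstacle is genuinely in (d): $\mathcal{I}^{\alpha,\beta}[u_0]$ is, for a merely Lipschitz $u_0$ and $\sigma\ge1$, only an a.e.-defined quantity, and the assumption $K(u_0)<\infty$ must be correctly pushed through the viscosity test (as above) and then propagated forward in time. The delicate feature is the dependence of the jump vector $\eta^{\alpha,\beta}$ on $x$ and $t$: for $\sigma\ge1$ the naive coefficient-perturbation estimate for a time shift involves $\int_{|z|<1}\rho(z)\,\nu(dz)$, which diverges, so one must instead keep the gradient correction in $\mathcal{I}^{\alpha,\beta}$ together (turning the small-jump contribution into a convergent $\rho(z)^2$-integral) and combine it with the spatial Lipschitz bound from (c). For $\sigma<1$ these difficulties evaporate: $K(u_0)<\infty$ holds automatically and (d) is subsumed by (c).
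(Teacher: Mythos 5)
Most of what you do lines up with, or is an acceptable variant of, the paper's argument: (a), (b) and the spatial Lipschitz bound are quoted from the standard theory (\cite{Jakobsen:2005jy,BI:P07}, Perron); the $t=0$ part of (d) is exactly the paper's device ($u_0\pm Ct$ with $C\sim 1+K(u_0)$ are super/subsolutions, then comparison); and your smoothed-cone barrier $u_0(x_0)+K\sqrt{|x-x_0|^2+\delta^2}+C_\delta t$ with $C_\delta\sim\delta^{1-\sigma},|\ln\delta|,1$ is essentially the paper's regularization at scale $\delta$ done on the barrier instead of on the data (the paper mollifies $u_0$, bounds $K(u_0^\epsilon)$ by Lemma \ref{K-lem}, applies (d) to $u^\epsilon$, and optimizes $\epsilon$), so the $s=0$ estimate in (c) and its extension by time-shifting (which only needs the uniform spatial Lipschitz bound of $u(s,\cdot)$) are fine.

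The genuine gap is in the passage of (d) from $s=0$ to general $0\le s\le t\le T$. You make it hinge on the unproven claim $\sup_{s\in[0,T]}\sup_{\A,\B}\|\I^{\A,\B}[u(s,\cdot)]\|_{L^\infty}\le C(1+K(u_0))$, i.e.\ a forward-in-time propagation of the nonlocal one-sided/second-difference bound, which you only gesture at (``a comparison argument for suitable jump-second-difference quantities''). No such propagation result is available here: for non-convex Isaacs equations with $(x,t)$-dependent $\eta^{\A,\B}$ there is no Jensen/convexity structure to push semiconcavity-type quantities through the equation, and this is precisely the kind of statement the paper is careful to avoid. The paper closes the time regularity for $s>0$ without it: compare $u$ with its time shift $u(\cdot+r,\cdot)$, which solves \eqref{eqn:main} with coefficients shifted in time and initial datum $u(r,\cdot)$, and invoke the continuous dependence estimates of \cite{Jakobsen:2005jy} (Theorems 5.1 and 5.3) together with the Lipschitz-in-$t$ regularity of $f,c,b,\eta$ from \ref{A2}--\ref{A3}; the difference of the two solutions is then controlled by $\|u(r,\cdot)-u_0\|_0\le C(1+K(u_0))r$ (the $t=0$ estimate, which uses only $K(u_0)$) plus an $O(r)$ coefficient-perturbation term. (When the coefficients are time-independent one can even skip continuous dependence: $u(\cdot+r,\cdot)$ solves the same equation, and since $c^{\A,\B}\ge0$ allows adding constants to super/subsolutions, comparison gives $\|u(\cdot+r,\cdot)-u\|_0\le\|u(r,\cdot)-u_0\|_0$ directly.) Replacing your propagation claim by this shift-plus-continuous-dependence argument repairs (d); as written, that step does not stand. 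The same device is also what the paper uses to extend (c) beyond $s=0$, although your time-shifted barrier there is legitimate since it needs only the $x$-Lipschitz bound.
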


The wellposedness and $x$-regularity results are quite standard, but the time
regularity results are new and more precise than earlier
results. These time regularity results are somewhat parallel to the results in Lemma
5.4 in \cite{CJ2011}, but the equation, norm and solution concepts are
different.

\begin{rem}
Under assumptions \ref{A3} and \ref{A4}, either (i) $v \in W^{2,
  \infty}(\rn)$, or (ii) $v \in W^{1, \infty}(\rn)$ and \ref{A5} holds with $\sigma<1$, are
sufficient conditions for $K(v)<\infty$. See Lemma \ref{K-lem} below.
\end{rem}

In the proof of Theorem \ref{wellpos_1} we will need the following lemma.

\begin{lem}\label{K-lem}
Assume \ref{A3}--\ref{A5}. Then there is a
constant $C>0$ such that for all $\phi\in C^2_b(\R^N)$ and $\epsilon\in(0,1)$,
$$K(\phi)\leq
\begin{cases}C\Big(\epsilon^{2-\sigma}\|D^2\phi\|_0+(1+\epsilon^{1-\sigma})\|D\phi\|_0\Big),
& \text{if }
\sigma\in(1,2),\\[0.2cm] C\Big(\epsilon\|D^2\phi\|_0+(1+|\ln\epsilon|)\|D\phi\|_0\Big),& 
\text{if }\sigma=1,\\[0.2cm]
C\|D\phi\|_0, &\text{if }\sigma\in[0,1).
\end{cases} $$

\end{lem}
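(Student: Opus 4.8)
The plan is to use the classical splitting of the L\'evy integral defining $\mathcal{I}^{\alpha,\beta}[\phi]$ into small, intermediate and large jumps, together with a Taylor expansion of the integrand near the origin carried out only to the order needed for $\nu$-integrability. Fix $(\alpha,\beta)\in\mathcal{A}\times\mathcal{B}$, $(t,x)\in Q_T$, and $\phi\in C^2_b(\R^N)$, and abbreviate $\eta=\eta^{\alpha,\beta}(t,x;z)$. By \ref{A3}, $|\eta|\le\rho(z)$ with $\rho(z)\le K|z|$ for $|z|<1$ and $1\le\rho(z)\le\rho(z)^2$ for $|z|>1$; by \ref{A5} the density of $\nu$ on $\{|z|<1\}$ is at most $C|z|^{-M-\sigma}$, so in polar coordinates
\[
\int_{a\le|z|<b}|z|^{\gamma}\,\nu(dz)\le C\int_a^b r^{\gamma-1-\sigma}\,dr,\qquad 0<a<b\le1,\ \ \gamma\ge0,
\]
which I will invoke with $\gamma=2$ and $\gamma=1$. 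For the integrand itself, a first-order Taylor estimate gives, for every $z$,
\[
\big|\phi(x+\eta)-\phi(x)-\eta\cdot\nabla\phi(x)\big|\le 2\|D\phi\|_0\,|\eta|\le 2\|D\phi\|_0\,\rho(z),
\]
while a second-order Taylor estimate gives, for $|z|<1$,
\[
\big|\phi(x+\eta)-\phi(x)-\eta\cdot\nabla\phi(x)\big|\le \tfrac12\|D^2\phi\|_0\,|\eta|^2\le \tfrac12 K^2\|D^2\phi\|_0\,|z|^2.
\]

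First I would dispose of the large jumps, which are harmless in every case: since $\rho(z)\le\rho(z)^2$ for $|z|>1$, assumption \ref{A4} gives $\int_{|z|\ge1}\big|\phi(x+\eta)-\phi(x)-\eta\cdot\nabla\phi(x)\big|\,\nu(dz)\le 2\|D\phi\|_0\int_{|z|\ge1}\rho(z)^2\,\nu(dz)\le C\|D\phi\|_0$. It then remains to handle $\{|z|<1\}$, and here the behaviour splits according to $\sigma$. When $\sigma\in[0,1)$ no further splitting is needed: the first-order estimate and the polar bound with $\gamma=1$ give $\int_{|z|<1}|z|\,\nu(dz)\le C\int_0^1 r^{-\sigma}\,dr<\infty$, hence $K(\phi)\le C\|D\phi\|_0$. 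When $\sigma\in[1,2)$, I split $\{|z|<1\}$ at radius $\epsilon$: on $\{|z|<\epsilon\}$ the second-order estimate with $\gamma=2$ contributes at most $C\|D^2\phi\|_0\int_0^\epsilon r^{1-\sigma}\,dr=\frac{C}{2-\sigma}\epsilon^{2-\sigma}\|D^2\phi\|_0$, while on $\{\epsilon\le|z|<1\}$ the first-order estimate with $\gamma=1$ contributes at most $C\|D\phi\|_0\int_\epsilon^1 r^{-\sigma}\,dr$, which equals $\frac{C}{\sigma-1}(\epsilon^{1-\sigma}-1)\|D\phi\|_0$ if $\sigma\in(1,2)$ and $C|\ln\epsilon|\,\|D\phi\|_0$ if $\sigma=1$. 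Adding the large-jump bound and using $\epsilon^{1-\sigma}-1\le\epsilon^{1-\sigma}$ yields the two stated estimates; since all bounds are uniform in $(\alpha,\beta)$ and $(t,x)$, taking the supremum finishes the proof.

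This argument is essentially a routine computation, so there is no real obstacle. The only points that require a little care are: to use the second-order expansion precisely on the scale $\{|z|<\epsilon\}$ --- the first-order expansion alone fails to be $\nu$-integrable near the origin once $\sigma\ge1$, which is exactly why the $\|D^2\phi\|_0$ term appears only for $\sigma\ge1$ --- and to read off the exact exponents $2-\sigma$, $1-\sigma$ and the logarithm from the one-dimensional integrals $\int r^{\gamma-1-\sigma}\,dr$ against the density bound \ref{A5}. The free parameter $\epsilon$ in the two nontrivial cases is precisely what will later be used to balance the coefficients of $\|D^2\phi\|_0$ and $\|D\phi\|_0$.
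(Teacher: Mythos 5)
Your proof is correct and follows essentially the same route as the paper: the large jumps are controlled by \ref{A3}--\ref{A4} via $\rho\le\rho^2$, and the small jumps are split at radius $\epsilon$ with a second-order Taylor bound on $\{|z|<\epsilon\}$ and a first-order (gradient) bound on $\{\epsilon\le|z|<1\}$, the exponents then coming from the polar-coordinate integrals against the density bound \ref{A5}. The only cosmetic difference is that you invoke \ref{A5} directly for the $\sigma\in[0,1)$ case, which is fine.
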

\begin{proof}
  When $\sigma<1$, then $|{\mathcal
    I}^{\alpha,\beta}\phi(x)|\leq C\|D\phi\|_0\int
  |\eta^{\alpha,\beta}(t,x,z)|\,\nu(dz)$. Since
  $\int\eta^{\alpha,\beta}\nu(dz)\leq \int
  \rho(z)\nu(dz)<\infty$ by \ref{A3} and \ref{A4}, the bound on
  $K(\phi)$ follows by taking the supremum over $x,\alpha,\beta$. 
  For $\sigma\geq1$, we split the integral in three parts and use Taylor's
  theorem: 
  \begin{align*}
 {\mathcal I}^{\alpha,\beta}[\phi]&=  \int \Big( \phi(x+\eta)-\phi(x)-\eta \nabla\phi(x)\Big)
 \ \nu(dz) \\ 
  & = 
  \int_{|z|<\epsilon}\int_0^1(1-t)\eta^T D^2\phi(x+t\eta)\eta
  \   dt\,\nu(dz) +
  \Big(\int_{\epsilon\leq |z|<1}+\int_{ |z|\geq1}\Big)\int_0^1\Big(\nabla \phi(x+t\eta)-\nabla \phi(x)\Big)\eta
  \, dt \,\nu(dz).
    \end{align*}
  By assumption \ref{A3} -- \ref{A5}, it follows that
  $${\mathcal I}^{\alpha,\beta}[\phi]\leq C\|D^2\phi\|_0\int_{|z|<\epsilon}|z|^2\frac{dz}{|z|^{N+\sigma}}+ C\|D\phi\|_0\bigg(\int_{\epsilon<|z|<1}|z|\frac{dz}{|z|^{N+\sigma}}+\int_{|z|\geq1}\rho(z)\nu(dz)\bigg).$$
By \ref{A3} and \ref{A4}, the last integral is finite, and the result
then follows from computing the two first integrals in polar
coordinates and taking the supremum over $x,\alpha,\beta$. 
\end{proof}

    \begin{proof}[Proof of Theorem \ref{wellpos_1}]
     We refer to Theorem 3.1 of the article \cite{Jakobsen:2005jy} for
     a proof of part (a) and $x$-regularity part of (c) and (d). Part (b) then
     follows e.g. from Perron's 
     method \cite{BJK_SW}. Time regularity in part (c) and (d) is
     new. We start by proving (d) and then use this result to prove (c).
     \medskip

\noindent     (d) First we show Lipschitz in 
     time at $t=0$ by using the comparison principle and the fact that
     $w^{\pm}(t,x) = u_0(x) \pm Ct$ are super- and subsolutions of
     \eqref{eqn:main} if $C$ is large enough. To see this, insert
     $w^{\pm}$ into the equation and use the regularity of $u_0$ to
     conclude. Here the assumption $K(u_0)<\infty$ is crucial and 
      minimal. To get Lipschitz regularity for all times, we use a 
     continuous dependence result and the $t$-Lipschitz regularity of
     the coefficients. See Theorem 5.1 and Theorem 5.3
     of  \cite{Jakobsen:2005jy} for the details, and note that there
     is no growth in $x$ of the estimates here since the coefficients
     and solutions are bounded.    
     \medskip
     
\noindent (c)       
 Let $0<\epsilon<1$ and regularize (by mollification) the initial data to get 
$u_0^\epsilon\in C_b^\infty(\R^N)$ satisfying
 $\|D^ku_0^\epsilon\|_0\leq  C\epsilon^{1-k}$ and
 $\|u_0-u_0^\epsilon\|_0\leq\epsilon$ (since $u_0$ is Lipschitz). Then let $u^\epsilon$ be the
 corresponding solution of \eqref{eqn:main}--\eqref{int-cond}. By (a)
 again $|u-u^\epsilon|\leq  
\|u_0^\epsilon-u_0\|_0\leq C\epsilon$, and by the estimates on
$D^ku_0^\epsilon$ and Lemma \ref{K-lem} with $\phi=u_0^\epsilon$,
\begin{align}\label{K-eps}K(u_0^{\epsilon})\leq C\begin{cases} 
1& \text{if } \sigma\in[0,1),\\
  (1+|\ln\epsilon|) & \text{if } \sigma=1,\\
  \epsilon^{1-\sigma}& \text{if } \sigma\in(1,2).\\
  \end{cases}
  \end{align}
By part (d) we have that $|u^\epsilon(t,x)-u_0^\epsilon(x)|\leq 
C(1+K(u_0^{\epsilon}))t$, and by the triangle inequality
$$|u(t,x)-u_0(x)|\leq C(\epsilon+K(u_0^{\epsilon})t+\epsilon).$$
When $\sigma<1$, $\sigma=1$, and $\sigma>1$, we take $\epsilon=0$,
$\epsilon=t$, and $\epsilon=t^{\frac1\sigma}$ respectively. 
This proves the result for $s=0$, $t\in[0,1]$ (and $x=y$). The result
trivially holds for
$s=0,t>1$, since then e.g. $|u(x,t)-u(x,0)|\leq
2\|u\|_0t^{\frac1\sigma}$. The general result then 
follows from the $t$-Lipschitz regularity of the coefficients and the same
continuous dependence result as in part (d). 
\end{proof}      
       

  \section{The main results: Error estimates for a monotone scheme}
  \label{sec:main}

In this section, we introduce a natural monotone difference-quadrature
scheme for \eqref{eqn:main}. The time discretizations include
explicit, implicit and explicit-implicit schemes. For these schemes we
prove wellposedness, $L^\infty$-stability, and the main results,
several estimates on their rates of convergence in $L^\infty$.

\medskip

For simplicity we consider a uniform
grid in space and time. For $M>0$, let $\Delta x >0$ and $\Delta
t:=\frac{T}M$ be the discretization parameters/mesh size in the time
and space and $h=\big(\Delta t, \Delta x \big)$. The corresponding
mesh is
       \begin{align*}
       \mathcal{G}_h^N = \big\{ (t_n, x_{\m}): t_n =n \Delta t, x_{\m}= \m \, \Delta x;\, \m\in\mathbb{Z}^N, n=0, 1,....,M\big\}.
       \end{align*}
To obtain a full discretization of \eqref{eqn:main}, we follow
\cite{BJK1} and perform the following steps:

\bigskip     
\noindent{\bf Step 1. Approximate singular diffusion by
  bounded diffusion.}
For $\delta \geq\Delta x$ we approximate $\mathcal{I}^{\A,\B}[\phi]$ by
replacing $\nu(dz)$ by the truncated non-singular measure
$\nu_{\delta}(\,dz):=\mathbf{1}_{|z|>\delta }(z)\,\nu(dz)$ in
\eqref{n_local:term}:
   \begin{align*}
    \mathcal{I}^{\A,\B,\delta}[\phi](t,x)&=  \int_{|z|>\delta} \big( \phi(t,x+\eta^{\A,\B}(t,x; z)) - \phi(t, x) -\eta^{\A,\B}(t,x;z)\cdot \nabla_x\phi(t,x)\big) \, \nu(dz)\\
      & = \mathcal{J}^{\A,\B,\delta}[\phi](t,x) - b_{\delta}^{\A,\B}(t,x)\cdot\nabla_x \phi(t,x),
   \end{align*} where
   \begin{align*}
    \mathcal{J}^{\A,\B,\delta}[\phi](t,x) =   \int_{|z|>\delta} \big( \phi(t,x+\eta^{\A,\B}(t,x; z)) - \phi(t, x) \big) \, \nu(dz),\quad b_{\delta}^{\A,\B}(t,x) =\int_{|z|>\delta} \eta^{\A,\B}(t,x;z) \, \nu(dz).
    \end{align*} 
This is a non-singular, nonnegative, consistent approximation of
$\mathcal{I}^{\A,\B}$, and a 
standard argument using Taylor's theorem gives the truncation error
\begin{align}
\label{eq:error-trunc} 
\big|\mathcal{I}^{\A,\B}[\phi]-\mathcal{I}^{\A,\B,\delta}[\phi]\big|
  \le  \frac12\|D^2 \phi\|_0\sup_{x,\alpha,\beta}\int_{|z|<\delta}|\eta^{\alpha,\beta}(t,x;z)|^2\nu(dz)\leq K\delta^{2-\sigma}\|D^2 \phi\|_0\qquad\text{for}\qquad \phi\in C_b^2(\R^N),
\end{align}
where the last inequality follows by \ref{A3}--\ref{A5}. 
Let $\tilde{b}_\delta^{\A,\B}(t,x) :=
b^{\A,\B}(t,x)-b_{\delta}^{\A,\B}(t,x)$. We approximate
\eqref{eqn:main} by replacing 
$\mathcal{I}^{\A,\B}$ by
$\mathcal{I}^{\A,\B,\delta} =\mathcal{J}^{\A,\B,\delta}-
b_{\delta}^{\A,\B}\cdot\nabla$, 
\begin{align}
\label{eqn:main-perturbed-1} & u_t^{\delta} + \inf_{\A\in
                               \mathcal{A}}\sup_{\B\in\mathcal{B}}
                               \left\{ -f^{\A,\B}(t,x) +c^{\A,\B}(t,x)
                               u^\delta(t,x) -
                               \tilde{b}_\delta^{\A,\B}(t,x) . \grad
                               u^\delta(t,x) -
                               \mathcal{J}^{\A,\B,\delta}[u^\delta](t,x)
                               \right\}  = 0  \mbox{\quad in} \ Q_T.
\end{align} 
\smallskip

\noindent{\bf Step 2. Discretize the local drift.}\ \  We discretize $\tilde{b}_\delta^{\alpha, \beta}\cdot\nabla u$ by simple upwind
finite differences: 
\begin{align*}
\mathcal{D}_{h}^{\alpha, \beta,\delta}[u](t,x)  &:=  \sum_{i=1}^N \Big[ \tilde{b}_{\delta,i}^{\A,\B,+}(t,x) \frac{u(t,x+e_i\Delta x)-u(t,x)}{\Delta x }
 + \tilde{b}_{\delta,i}^{\A,\B,-}(t,x) \frac{u(t,x-e_i\Delta x)
                                                                           -u(t,x)}{\Delta x}\Big]\\ \nonumber
 & = \sum_{\bj\neq 0} d_{h, \bj}^{\A, \B,\delta}(t,x) \Big[ u(t, x+x_{\bj})- u(t, x)\Big],
\end{align*}
where $\{e_i\}_i \subset \rn$ is the standard basis of $\rn$,  $
b^{\pm}=\max(\pm b,0)$, $d_{h, \pm  e_i}^{\A, \B,\delta}(t,x) = \frac{\tilde b^{\A,\B,\pm}_{\delta,i}(t,x)}{\Delta
  x}\geq0$ 
and $d_{h,\bj}^{\A, \B,\delta}(t,x)=0$ otherwise.
Hence the discretization is positive/monotone, and it is consistent since 
  \begin{align}
 \label{eq:error-drift} \big|\tilde b^{\alpha,\beta}_\delta(t,x)\cdot\nabla \phi(x)
    -\mathcal{D}_{h}^{\alpha, \beta,\delta}[\phi](t,x)\big| \le
    \frac12\Delta x \sum_{i}\big|\tilde b^{\A,\B}_{\delta,i}(t,x)\big| 
    \|D^2 \phi\|_0\leq K\Delta x\,\Gamma(\sigma,\delta)\|D^2
    \phi\|_0\ \ \text{for}\ \ \phi\in C_b^2(\R^N),
 \end{align}
where
 \begin{align*} 
\Gamma(\sigma, \delta) =
\left\{
       \begin{array}{ll}
       \delta^{1-\sigma} \quad & \text{when} \quad \sigma > 1, \\
       -\log \delta \quad & \text{when} \quad \sigma = 1,\\
       1 \quad & \text{when} \quad \sigma <1.
       \end{array}
\right.
\end{align*} 
The last inequality follows by the definition of $\tilde
b^{\alpha,\beta}_\delta$ since $\int_{|z|>\delta}|\eta^{\alpha,\beta}(t,x;z)|\,\nu(dz)\leq C\Gamma(\sigma, \delta)$ by \ref{A3}--\ref{A5}.

\bigskip     
\noindent{\bf Step 3. Discretize the nonlocal diffusion.} \ \ We
discretize $\mathcal{J}^{\A, \B,\delta}$ by a quadrature formula 
obtained by replacing the integrand by a monotone interpolant
(cf. \cite{BJK1}): 
\begin{align}
 \notag   \mathcal{J}_h^{\A, \B,\delta}[\varphi](t,x) :=
  &\int_{|z|>\delta} \gi_h\big[\varphi(t,x+\cdot)-
    \varphi(t,x)\big](\eta^{\A,\B}(t,x;z))\nu(\,dz),
\end{align}
where $\gi_h$ is piecewise linear/multilinear interpolation on the
spatial grid $\Delta x\mathbb Z^N$. That is,
       \begin{align}\label{interpolation1}
& \gi_h[\phi](x) = \sum_{\bj \in \mathbb{Z}^N} \phi(x_{\bj})
  \omega_{\bj} (x; h)\quad\text{for}\quad x\in \rn,
\end{align} 
where the weights $\omega_{\bj}$ are the standard ``tent functions'' satisfying
$0\leq\omega_{\bj}(x;h)\leq 1$, $\omega_{\bj}(x_{\mathbf{k}};h) =
\delta_{\bj,\mathbf{k}}$, $\sum_{\bj} \omega_{\bj}=1$, 
  $\mathrm{supp}\,\omega_{\bj}\subset B(x_{\bj},2\Delta x)$, and
  $\|D\omega_{\bj}\|_0\leq C(\Delta x)^{-1}$. Note that the sum in
\eqref{interpolation1} is always finite. We can rewrite the
approximation in discrete monotone form:
 \begin{align*}
   \mathcal{J}_h^{\A, \B,\delta}[\varphi](t,x) = 
\sum_{\bj \in \Z^N} \big(\varphi(t,x+x_{\bj})-\varphi(t,x)\big)
   \kappa_{h,\bj}^{\A,\B,\delta}(t,x); \quad  \kappa_{h,\bj}^{\A,\B,\delta}(t,x;h) =\textstyle \int_{|z|>\delta} \omega_{\bj}(\eta^{\A,\B}(t,x;z); h)\nu(dz),
             \end{align*}
where $\kappa_{h,\bj}^{\A,\B,\delta}$ is well-defined and
nonnegative. This approximation is nonnegative, and since
\begin{align}\label{int_err}
\big|\gi_h[\varphi](x)- 
\varphi(x)\big| \le K \|D^2\varphi\|_0 (\Delta x)^2,
\end{align}
it is consistent with truncation error
 \begin{align}
\label{eq:nonlocal-error}  |\mathcal{J}^{\alpha, \beta,\delta}[\phi]
   -\mathcal{J}_h^{\alpha, \beta,\delta}[\phi]| \le K (\Delta x)^2 \|D^2
   \phi\|_0\int_{|z|>\delta} \nu(dz)\leq K_I\|D^2
   \phi\|_0\frac{(\Delta x)^2}{\delta^{\sigma}}\qquad\text{for}\qquad \phi\in C_b^2(\R^N).
 \end{align}
The last inequality follows from \ref{A5}. We also note
that since all $\omega_\bj$'s have same diameter compact support and \ref{A3}
and \ref{A5} hold with $\sigma\in(0,2)$, there is a constant $K_N$
depending only on $N$ such that  
                 \begin{align*}
                    \sum_{\bj \neq 0}  \kappa_{h,\bj}^{\A,\B,\delta}(t,x)
                   \le  \sum_{\bj \neq 0}\|D\omega_\bj\|_0 \int_{|z|>\delta}
                   \big|\eta^{\A,\B}(t,x;z)\big|\,\nu(dz)\leq
                   \frac{K_N}{\Delta x}\Gamma(\sigma,\delta).  
                   \end{align*}

\medskip     
\noindent{\bf Step 4. The full discretization of \eqref{eqn:main}.} \ \ 
Combining the previous steps we obtain the following semidiscrete
approximation of \eqref{eqn:main} (cf. \eqref{eqn:main-perturbed-1}):
 \begin{align}\label{eqn:main-semidiscrete}
 & u_t + \inf_{\A\in \mathcal{A}}\sup_{\B\in\mathcal{B}} \left\{ -f^{\A,\B}(t,x) +c^{\A,\B}(t,x) u(t,x) -\mathcal{D}_h^{\A, \B,\delta}[u](t,x)- \mathcal{J}_h^{\A,\B,\delta}[u](t,x) \right\}  = 0. 
\end{align} 
To discretize in time we use a two-parameter monotone $\theta$-like
method that allows for explicit, implicit, and explicit-implicit
versions (cf. \cite{BJK1}): 
For $\theta, \vartheta\in [0,1]$, 
\begin{align}\label{eq:the scheme}
 \notag U_{\bj} ^n = &\ U_\bj^{n-1}-\Delta t  \inf_{\A\in \mathcal{A}}\sup_{\B\in\mathcal{B}} \left\{- f^{\A,\B, n-1}_\bj + c_\bj^{\A, \B,n} U_\bj^{n-1} -\theta \mathcal{D}_h^{\A, \B,\delta}[U]_\bj^{n}-(1-\theta) \mathcal{D}_h^{\A, \B,\delta}[U]_\bj^{n-1} \right. \\
 \notag & \hspace*{6cm}\left. -\vartheta \mathcal{J}_h^{\A,\B,\delta}[U]_\bj^{n}
 -(1-\vartheta) \mathcal{J}_h^{\A,\B,\delta}[U]_\bj^{n-1} \right\}\\ 
 &\hspace{8cm}\text{for}\quad \bj \in \mathbb{Z}^N,\quad 0\leq n\leq M,\\
 \notag  U_\bj^0 =&\ u(0, x_\bj)\quad\text{for}\quad \bj \in \mathbb{Z}^N,
 \end{align}
where $U^n_{\bj}= U_h(t_n,x_{\bj})$ is the solution of
the scheme and 
$g^{n}_\bj:=g(t_n, x_\bj)$ for any function $g$ and $(t_n,
x_\bj)\in\mathcal{G}^N_h$.  
With this convention, 
 \begin{align*}
\mathcal{D}_h^{\A,
   \B,\delta}[\phi]^n_{\bar{\bj}} = \sum_{\bj\neq 0} d_{h, \bj,
   \bar{\bj}}^{\A, \B,\delta, n} \Big[ \phi(t_n, x_{\bar{\bj}}+x_{\bj})-
   \phi(t_n, x_{\bar{\bj}})\Big] \quad \text{and}\quad
  \mathcal{J}_h^{\A, \B,\delta}[\phi]^n_{\bar{\bj}} = \sum_{\bj\neq 0} \kappa_{h, \bj, \bar{\bj}}^{\A, \B,\delta, n} \Big[ \phi(t_n, x_{\bar{\bj}}+x_\bj)- \phi(t_n, x_{\bar{\bj}})\Big],
\end{align*}
and we may rewrite our scheme \eqref{eq:the scheme} as
\begin{align}
\label{eq:scheme_new_form}\inf_{\alpha\in \mathcal{A}}\sup_{\beta \in \mathcal{B}}\Big\{
a_{\bar{\bj},\mathbf{0}}^{n,n}(\A, \B) U_{\bar{\bj}}^{n} -
\sum_{\bj\neq \mathbf{0}} a_{\bar{\bj}, \bj}^{n,n}(\A, \B)
U_{\bar{\bj}+\bj}^{n} - \sum_{\bj} a_{\bar{\bj}, \bj}^{n,n-1}(\A,
\B) U_{\bar{\bj}+\bj}^{n-1}-\Delta t f_{\bar{\bj}}^{\A, \B, n}\Big\}=
0
\end{align}
with
\begin{align*}
a_{\bar{\bj},\mathbf{0}}^{n,m}(\A, \B)&=\begin{cases} 1 +\Delta t \theta  \sum_{\bj\neq 0}  d_{h,\bj, \bar{\bj}}^{\A, \B,\delta, m} + \Delta t\vartheta \sum_{\bj\neq 0}\kappa_{h,\bj, \bar{\bj}}^{\A, \B,\delta, m}& \text{if}\quad m =n \\
                              1- \Delta t \big[ (1-\theta) \sum_{\bj\neq 0}   d_{h, \bj,\bar{\bj}}^{\A, \B,\delta, m} + (1-\vartheta) \sum_{\bj\neq 0}\kappa_{h,\bj, \bar{\bj}}^{\A, \B,\delta, m}+ c_{\bar{\bj}}^{\A, \B, n}] & \text{if}\quad m =n-1, \end{cases}\\
   a_{\bar{\bj}, \bj}^{n,m}(\A, \B)&=\begin{cases} \Delta t \theta
   d_{h,\bj, \bar{\bj}}^{\A, \B,\delta, m} + \Delta t\vartheta\kappa_{h,\bj,
     \bar{\bj}}^{\A, \B,\delta, m} & \text{if}\quad m =n \\
\Delta t \big[ (1-\theta)  d_{h,\bj, \bar{\bj}}^{\A, \B,\delta, m} + (1-\vartheta)\kappa_{h,\bj, \bar{\bj}}^{\A, \B,\delta, m} ] & \text{if}\quad m =n-1. \end{cases}
\end{align*}
Since $d,\kappa\geq0$, we see that the scheme \eqref{eq:scheme_new_form}  has nonnegative
coefficients and hence is monotone under the CFL
condition:
 \begin{align}
\label{eq:CFL-condtion} \Delta t \Big[ (1-\theta)\sum_{\bj\neq 0}   d_{h, \bj,\bar{\bj}}^{\A, \B,\delta, n-1} + (1-\vartheta)\sum_{\bj\neq 0}\kappa_{h, \bj,\bar{\bj}}^{\A, \B,\delta, n-1} + c_{\bar{\bj}}^{\A, \B, n}]\Big] \le 1.
 \end{align}
By the discussion and definitions of $d$
and $\kappa$ above, for all $0<\Delta x\leq\delta \le 1$,
          \begin{align*} 
        \sum_{\bj\neq 0} d_{h, \bj, \bar{\bj}}^{\A, \B,\delta, n} \le \frac{K_{D}}{\Delta x}\Gamma(\sigma,\delta)\qquad\text{and}\qquad \sum_{\bj\neq 0} \kappa_{h, \bj, \bar{\bj}}^{\A, \B,\delta, n} \le \frac{K_{I}}{\Delta x} \Gamma(\sigma,\delta)
          \end{align*}
for some constants $K_D,K_I$. Hence the CFL condition is
satisfied when
 \begin{align}
 \label{eq:alternate-cfl}\frac{\Delta t}{\Delta x} \Gamma(\sigma,\delta)\Big( (1-\theta)K_D + (1-\vartheta) K_I\Big) + \Delta t \sup_{\A,\B} |c^{\A,\B}|\le 1.
 \end{align}
 \begin{rem}\
   \label{CFL-rem}
\smallskip

\noindent (a) The scheme is explicit when
$\theta=0=\vartheta$, implicit when $\theta=1=\vartheta$,
$\theta$-method like when $\theta=\vartheta$, and explicit-implicit
with explicit convection and implicit diffusion when $\theta=0$
and $\vartheta=1$.
\medskip

\noindent (b) It is possible to use other monotone approximations in steps
1 -- 4, and obtain schemes that can be analyzed using minor
modifications of the arguments we present here.

\medskip

\noindent (c) The CFL condition \eqref{eq:alternate-cfl} gives a
constraint on the relation between $\delta$, $\Delta x$, $\Delta t$
when the scheme is not completely implicit. In the ``first order''
case, when $\sigma\in(0,1)$ in \ref{A5}, we get the usual CFL condition  
$$\Delta t\leq K \Delta x.$$
In the critical case $\sigma=1$, then $\Delta t\leq K \Delta x\,
|\ln\Delta x|$. When the order of the equation is greater than $1$, $\sigma\in(1,2)$ in \ref{A5}, then
$$\Delta t\leq K \delta^{\sigma-1}\Delta x,$$
which when $\delta=(\Delta x)^{\frac1\sigma}$ (giving the optimal
convergence rate, see below) gives
$$\Delta t\leq K \Delta x^{2-\frac1\sigma}.$$
\end{rem}

 We have the following existence, uniqueness and stability result
 for the scheme.

 \begin{thm}\label{thm:existence-scheme}
 Assume \ref{A1}--\ref{A5}, $0<\Delta x\leq \delta\leq 1$, and the CFL condition
 \eqref{eq:CFL-condtion}. 
\medskip

\noindent (a) (Monotone scheme)\ If  $U_h$ and
$V_h$ are bounded sub and 
supersolutions of \eqref{eq:the scheme} with $U_h(0,\cdot)\le V_h(0,\cdot)$, then $U_h\le V_h$.
\medskip

\noindent (b) There exists a unique bounded  solution $U_h$ of
the initial value problem \eqref{eq:the scheme}--\eqref{int-cond}.
\medskip

\noindent (c) ($L^\infty$-stability) \ 
The solution $U_h$ from (b) satisfies 
$|U_h(t_n,x)|\le \|u_0\|_0 +t_n\sup_{\A,\B} \|f^{\A,\B}\|_0.$

  \medskip
    \noindent (d) There is a constant $K\geq0$ such that the solution
  $U_h$ from (b) satisfies for all $x$, $t_n$,
$$|U_h(t_n,x)-u_0(x)|\leq K\bar\omega(t_n),\qquad\text{where
      $\bar\omega$ is defined in \eqref{bomega}}.$$

\noindent (e) Assume in addition that $K(u_0)<\infty$ (cf. Theorem
\ref{wellpos_1}). Then there is $C\geq0$ only depending on the
data \ref{A1}--\ref{A5} such that the solution $U_h$ from
(b) satisfies  for all $x$, $t_n$, 
$$|U_h(t_n,x)-u_0(x)|\leq C (1+K(u_0))t_n.$$

 \end{thm}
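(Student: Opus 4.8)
The plan is to prove parts (a)--(e) in turn; (a)--(c) follow from standard monotone-scheme arguments, while (d) and (e) are the discrete analogues of Theorem~\ref{wellpos_1}(c)--(d) and carry the new content. For (a) I would induct on the time level $n$, the case $n=0$ being the hypothesis. Assuming $U_h(t_{n-1},\cdot)\le V_h(t_{n-1},\cdot)$ on the grid, I fix $\varepsilon>0$ and a near-maximiser $x_{\bj}$ of $U_h(t_n,\cdot)-V_h(t_n,\cdot)$, set $M_n:=\sup_{\bj}\big(U_h(t_n,x_{\bj})-V_h(t_n,x_{\bj})\big)<\infty$, use the subsolution property of $U_h$ in the form \eqref{eq:scheme_new_form} to pick $\A^*$ with $\sup_{\B}\{\cdots\}\le\varepsilon$ there, then the supersolution property of $V_h$ at $\A^*$ to pick $\B^*$ with $\{\cdots\}\ge-\varepsilon$, and subtract the two inequalities at $(\A^*,\B^*)$. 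Since under the CFL condition \eqref{eq:CFL-condtion} all coefficients $a^{n,m}_{\bj,\bk}(\A^*,\B^*)$ ($m=n,n-1$) are nonnegative, and since $a^{n,n}_{\bj,\mathbf 0}-\sum_{\bk\ne\mathbf 0}a^{n,n}_{\bj,\bk}=1$ and $\sum_{\bk}a^{n,n-1}_{\bj,\bk}=1-\Delta t\,c^{\A^*,\B^*,n}_{\bj}$, the near-maximality at level $n$ gives $M_n\le(1-\Delta t\,c^{\A^*,\B^*,n}_{\bj})M_{n-1}^{+}+C\varepsilon\le M_{n-1}^{+}+C\varepsilon$, using $c^{\A,\B}\ge0$; letting $\varepsilon\downarrow0$ and iterating down to $M_0^{+}=0$ yields $M_n\le0$.

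For (b), uniqueness is immediate from (a), and a bounded solution is built by advancing $n\mapsto n+1$: each single-step problem \eqref{eq:scheme_new_form} is a discrete Isaacs equation with nonnegative off-diagonal weights and strictly positive diagonal ($a^{n,n}_{\bj,\mathbf 0}\ge1$), hence uniquely solvable in $\ell^{\infty}(\mathbb{Z}^{N})$ (for instance by Perron's method using (a) and the barriers below, or as in \cite{BJK1}). For (c), the \emph{spatially constant} functions $W^{\pm,n}:=\|u_0\|_0\pm t_n\sup_{\A,\B}\|f^{\A,\B}\|_0$ are super-/subsolutions of \eqref{eq:the scheme}: because $\mathcal D_h^{\A,\B,\delta}$ and $\mathcal J_h^{\A,\B,\delta}$ annihilate space-constants, the scheme collapses to $W^{\pm,n}=W^{\pm,n-1}\mp\Delta t\,\inf_{\A}\sup_{\B}\{-f^{\A,\B,n-1}\pm c^{\A,\B,n}W^{\pm,n-1}\}$, and since $c^{\A,\B}\ge0$ and $W^{\pm,n-1}\ge0$ the right-hand side is bounded by $W^{\pm,n-1}+\Delta t\sup\|f\|_0$ (resp.\ from below); with $W^{+,0}=\|u_0\|_0\ge u_0\ge-\|u_0\|_0=W^{-,0}$ on the grid, (a) gives the stated $L^{\infty}$-bound.

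The heart of the matter is (e). I would compare $U_h$ with the barriers $w^{\pm,n}_{\bj}:=u_0(x_{\bj})\pm C(1+K(u_0))\,t_n$, reducing the proof to showing these are super-/subsolutions of \eqref{eq:the scheme} for a suitable data-dependent $C$ and then applying (a) (note $w^{\pm,0}=U_h^0$ on the grid). Since the affine-in-time shift is invisible to spatial differences, $\mathcal D_h^{\A,\B,\delta}[w^{\pm}]^{m}=\mathcal D_h^{\A,\B,\delta}[u_0]^{m}$ and $\mathcal J_h^{\A,\B,\delta}[w^{\pm}]^{m}=\mathcal J_h^{\A,\B,\delta}[u_0]^{m}$, so everything comes down to an $O(1+K(u_0))$ bound, uniform in the mesh and in $\delta\in[\Delta x,1]$, for $\theta\mathcal D_h^{\A,\B,\delta}[u_0]^{n}+(1-\theta)\mathcal D_h^{\A,\B,\delta}[u_0]^{n-1}+\vartheta\mathcal J_h^{\A,\B,\delta}[u_0]^{n}+(1-\vartheta)\mathcal J_h^{\A,\B,\delta}[u_0]^{n-1}$, the term $c^{\A,\B,n}\cdot C(1+K(u_0))t_{n-1}\ge0$ being discarded by its sign exactly as in the proof of Theorem~\ref{wellpos_1}(d), so that $C$ does not reappear on both sides. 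The decisive point is an \emph{exact cancellation}: because $\tilde b_\delta^{\A,\B}=b^{\A,\B}-b_\delta^{\A,\B}$ and the piecewise (multi)linear interpolation $\gi_h$ reproduces affine functions, at each time level $\mathcal D_h^{\A,\B,\delta}[u_0]+\mathcal J_h^{\A,\B,\delta}[u_0]=b^{\A,\B}\!\cdot(\text{upwind gradient of }u_0)+\int_{|z|>\delta}\gi_h\big[u_0(\cdot+\,\bullet)-u_0(\cdot)-\langle\text{upwind gradient},\,\bullet\rangle\big]\big(\eta^{\A,\B}(\cdot;z)\big)\,\nu(dz)$, so the drifts $b_\delta^{\A,\B}$ -- which by themselves blow up like $\Gamma(\sigma,\delta)$ -- drop out entirely. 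The first term is bounded by $\|b\|_0\|Du_0\|_0$; the second is a ``second-order remainder'' that, via the interpolation estimate \eqref{int_err}, the truncation estimates \eqref{eq:error-trunc}--\eqref{eq:nonlocal-error}, assumptions \ref{A3}--\ref{A5}, and the $\delta$-uniform bound $\sup_{\delta\le1}\int_{|z|>\delta}\rho(z)^{2}\,\nu(dz)<\infty$, is controlled by $C(1+K(u_0))$ (the small-jump/second-order part being exactly what $K(u_0)$ measures, hence finite by hypothesis). Finally, when $\theta\ne\vartheta$ the two time levels no longer pair up and one extra term proportional to $\mathcal D_h^{\A,\B,\delta}[u_0]^{n}-\mathcal D_h^{\A,\B,\delta}[u_0]^{n-1}$ survives; this is estimated using the time-Lipschitz regularity of $\eta^{\A,\B}$ and $b^{\A,\B}$ from \ref{A2}--\ref{A3} together with the CFL condition \eqref{eq:alternate-cfl}, which forces $\Delta t\,\Gamma(\sigma,\delta)\le C\Delta x\le C$. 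With $C$ chosen accordingly, (a) delivers $|U_h(t_n,x)-u_0(x)|\le C(1+K(u_0))t_n$.

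Part (d) then follows from (e) by mollification, paralleling how Theorem~\ref{wellpos_1}(c) follows from (d): regularise $u_0$ at scale $\epsilon\in(0,1)$ to $u_0^{\epsilon}\in C^{\infty}_b(\R^{N})$ with $\|D^{k}u_0^{\epsilon}\|_0\le C\epsilon^{1-k}$ and $\|u_0-u_0^{\epsilon}\|_0\le\epsilon$, let $U_h^{\epsilon}$ solve \eqref{eq:the scheme}--\eqref{int-cond} with data $u_0^{\epsilon}$, use (a) (comparing with $U_h^{\epsilon}\pm\epsilon$, $c\ge0$) to get $\|U_h-U_h^{\epsilon}\|_0\le\epsilon$, note $K(u_0^{\epsilon})<\infty$ since $u_0^{\epsilon}\in W^{2,\infty}$, and apply (e) together with the bound \eqref{K-eps} to obtain $|U_h^{\epsilon}(t_n,x)-u_0^{\epsilon}(x)|\le C(1+K(u_0^{\epsilon}))t_n$; the triangle inequality gives $|U_h(t_n,x)-u_0(x)|\le C(\epsilon+K(u_0^{\epsilon})t_n)$, and taking $\epsilon=0$, $\epsilon=t_n$, $\epsilon=t_n^{1/\sigma}$ in the cases $\sigma\in[0,1)$, $\sigma=1$, $\sigma\in(1,2)$ yields $|U_h(t_n,x)-u_0(x)|\le K\bar\omega(t_n)$ for $t_n\le1$, while for $t_n>1$ the claim is immediate from the $L^{\infty}$-bound (c) and $\bar\omega(t_n)\ge\bar\omega(1)$. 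The step I expect to be the main obstacle is precisely the uniform (in $h$ and $\delta$), $O(1+K(u_0))$ estimate of the combined discrete drift-and-nonlocal operator applied to $u_0$ in (e): it hinges on the $\tilde b_\delta+b_\delta=b$ cancellation so that the $\Gamma(\sigma,\delta)$-blow-up never enters, on absorbing the interpolation and truncation remainders into $K(u_0)$ through \ref{A3}--\ref{A5}, and, in the genuinely two-level case $\theta\ne\vartheta$, on using the CFL condition to control the discrepancy between the operators at $t_{n-1}$ and $t_n$.
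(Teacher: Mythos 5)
Your proposal follows essentially the same route as the paper for every part: comparison from the positivity of the scheme's coefficients for (a), fixed-point/time-iteration for (b), constant barriers plus (a) for (c), the barriers $u_0\pm C(1+K(u_0))t_n$ compared with $U_h$ via (a) for (e), and mollification of $u_0$ combined with (e), \eqref{K-eps}, and the case-wise choice $\epsilon=0,\,t_n,\,t_n^{1/\sigma}$ for (d). Your justification of the barrier property in (e) (the exact cancellation of $b_\delta^{\A,\B}$ between $\mathcal{D}_h^{\A,\B,\delta}$ and $\mathcal{J}_h^{\A,\B,\delta}$ via exactness of the multilinear interpolant on affine functions) is in fact more detailed than the paper's one-line assertion that $V^\pm$ are super-/subsolutions for the stated $C$, and is consistent with it.
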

 \begin{proof}[Proof of Theorem \ref{thm:existence-scheme}] 
The proofs of (a)--(c) are standard. Part (a) is a direct consequence
of the scheme having positive coefficients, and part (c) follow from 
(a) since $\|u\|_0\pm t_n\sup_{\alpha,\beta}\|f^{\alpha,\beta} \|$ are
super- and subsolutions. Part (b), existence and uniqueness, can
be proved using time-iteration and Banach fixed point theorem. The proof is
essentially the same as the proof of Theorem 3.1 in \cite{BJK1}. Part
(d) and (e) are new and non-standard. We will prove
these results in the same way as for the solution of the continuous
problem \eqref{eqn:main}--\eqref{int-cond}, cf. Theorem \ref{wellpos_1} (c) and
(d). First we prove (e), and then use this result to prove (d).
\medskip

\noindent (e) 
Note that
$V^\pm(x,t_n)=u_0(x)\pm Ct_n$  
are super- and subsolutions of the scheme \eqref{eq:the
  scheme}--\eqref{int-cond} 
if $h$ is sufficiently small and
$$C\geq 1+K(u_0)+
\sup_{\alpha,\beta}\bigg(\|u_0\|_1\|b^{\alpha,\beta}\|_0+\|u_0\|_0\|c^{\alpha,\beta}\|_0+\|f^{\alpha,\beta}\|_0\bigg). $$
The result then follows since $V^-\leq U_h\leq V^+$ by comparison
(part (a)).
\medskip

\noindent (d) We regularize (by mollification) the initial data to get 
$u_0^\epsilon$ and let $U^\epsilon_h$ be the corresponding solution of
\eqref{eq:the scheme}--\eqref{int-cond}. By (a) again $|U_h-U_h^\epsilon|\leq 
\|u_0^\epsilon-u_0\|_0\leq C\epsilon$, and the estimate \eqref{K-eps} for
$K(u_0^{\epsilon})$ still holds.
Hence by part (e) we have that $|U_h^\epsilon(t_n,x)-u_0^\epsilon(x)|\leq 
CK(u_0^{\epsilon})t_n$, and then by the triangle inequality
$$|U_h(t_n,x)-u_0(x)|\leq C(\epsilon+K(u_0^{\epsilon})t_n+\epsilon).$$
In view of \eqref{K-eps}, we conclude by taking $\epsilon=0$,
$\epsilon=t_n$, $\epsilon=t_n^{\frac1\sigma}$ when $\sigma<1$, $\sigma=1$, $\sigma>1$  respectively.
 \end{proof}

Convergence of $U_h$ to the unique viscosity solution of
   \eqref{eqn:main}-\eqref{int-cond} follows from (an easy nonlocal
   extension of) the Barles-Perthame-Souganidis half-relaxed limits method
   \cite{Barles:1991tc} in view of monotonicity, stability,
   consistency of the scheme and strong comparison of the limit
   equation.

We now give precise estimates on the rate of convergence of our
method for our low-regularity solutions. These are the main
contributions of the paper. They are the first 
such result for non-convex degenerate equations of order greater than
one, the first result for nonlocal non-convex
equations, and these
estimates are more accurate than previous results for the non-local
operators $\I$: First, as expected, the rates depend on the maximal
fractional order of the operator $\I$, or 
equivalently, on $\sigma$ in assumptions \ref{A5}. But we also see a
surprising phenomenon that does not seem to have been observed before:
We have 3 different results depending on whether $\eta$ depends
on $(x,t)$, only on $x$, and on none of them. We devote one theorem to
each case:

\begin{thm}[General case]\label{main:thm}
Assume \ref{A1}--\ref{A5}, $0<\Delta x \leq\delta\leq1$,
the CFL condition \eqref{eq:CFL-condtion} holds, $u$ solves the equation
\eqref{eqn:main}-\eqref{int-cond}, and $U^{\delta}_h$ solves the scheme \eqref{eq:the
  scheme}-\eqref{int-cond}. Then there exists a constant $C>0$ (only
depending on the constants in \ref{A1}--\ref{A5}) such 
that for all $(t,x)\in \mathcal{G}_h^N$,
\begin{align*}
 \big|U^{\delta}_h(t,x) - u(t,x) \big| \leq C (1+T)
\begin{cases}(T\wedge1)^{\frac12}\bigg(\Delta t^{\frac12} + \Delta x ^{\frac12}+\delta^{1-\frac\sigma2}\bigg)&
  \text{if}\quad\sigma\in[0,1),\\[0.4cm] 
(T\wedge1)^{\frac1{2}}\bigg(\Delta
 t^{\frac1{2}}|\log \Delta t|+\Delta
 t^{\frac12}|\log \delta|+\Delta
 x^{\frac12}|\log \delta| +\delta^{\frac{1}{2}}\bigg)
&  \text{if}\quad \sigma=1,\\[0.4cm]
(T\wedge1)^{\frac1{2\sigma}}\Delta
 t^{\frac1{2\sigma}}+(T\wedge1)^{\frac12}\bigg(\Delta
 t^{\frac12}\delta^{1-\sigma}+\Delta
 x^{\frac12}\delta^{1-\sigma}
 +\delta^{1-\frac{\sigma}{2}}\bigg)
&  \text{if}\quad \sigma\in(1,2).
\end{cases} 
\end{align*}
\end{thm}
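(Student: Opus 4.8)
The plan is to estimate $\|U_h^\delta-u\|_{L^\infty(\mathcal G_h^N)}$ by a doubling-of-variables comparison between the exact solution $u$ of \eqref{eqn:main}--\eqref{int-cond} and the scheme solution $U_h^\delta$, in the spirit of the viscosity-solution uniqueness proofs for \emph{first order} equations \cite{CDF,CL:2approx,souganidis1985}. The point of using this ``first order'' technique rather than the Krylov--Barles--Jakobsen regularization method is that it is symmetric in the two functions and uses neither convexity of the Hamiltonian nor any regularity beyond the Lipschitz/H\"older bounds already recorded in Theorem \ref{wellpos_1} and Theorem \ref{thm:existence-scheme} --- which is what lets us handle the genuinely non-convex Isaacs nonlinearity. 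Accordingly, both one-sided bounds $\sup_{\mathcal G_h^N}(U_h^\delta-u)$ and $\sup_{\mathcal G_h^N}(u-U_h^\delta)$ will come from the same argument, using that $u$ is simultaneously a viscosity sub- and supersolution and that $U_h^\delta$, being the solution of a scheme that is monotone under the CFL condition \eqref{eq:CFL-condtion}, is simultaneously a discrete sub- and supersolution. We describe the bound on $U_h^\delta-u$.

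First I would dispose of the initial time layer: for $t_n$ of order $\Delta t$ (or of the penalization scale below), combining $|U_h^\delta(t_n,x)-u_0(x)|\le K\bar\omega(t_n)$ (Theorem \ref{thm:existence-scheme}(d)) with $|u(t_n,x)-u_0(x)|\le K\bar\omega(t_n)$ (Theorem \ref{wellpos_1}(c)) already produces several of the $(T\wedge1)^{\cdots}\Delta t^{\cdots}$ contributions, in particular the $\Delta t^{\frac1{2\sigma}}$ term when $\sigma\in(1,2)$, which reflects the anisotropic time regularity $\bar\omega(r)\sim r^{1/\sigma}$. For the bulk, fix $\epsilon,\tau\in(0,1)$ (space and time penalization scales, ultimately chosen $\sigma$-dependently) and maximize over $(t,x)\in\overline Q_T$, $(s,y)\in\mathcal G_h^N$ the function
\[
\Theta(t,x,s,y):=U_h^\delta(s,y)-u(t,x)-\frac{|x-y|^2}{\epsilon}-\frac{|t-s|^2}{\tau}-(\text{small confining terms}),
\]
where the confining terms (e.g.\ a small multiple of $\langle x\rangle$ and of $T-t$) ensure a maximum is attained at some $(\bar t,\bar x,\bar s,\bar y)$. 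Using the $x$-Lipschitz and $\bar\omega$-in-$t$ regularity of both $u$ and $U_h^\delta$ one localizes this point: $|\bar x-\bar y|\lesssim\epsilon$, while $|\bar t-\bar s|$ is controlled by inverting $r\mapsto r^2/\tau\le C\bar\omega(r)$, and $\bar t,\bar s$ may be assumed bounded away from $0$ (else invoke the initial-layer bound). Then $(s,y)\mapsto u(\bar t,\bar x)+\frac{|\bar x-y|^2}{\epsilon}+\frac{|\bar t-s|^2}{\tau}$ touches $U_h^\delta$ from above at $(\bar s,\bar y)$ and $(t,x)\mapsto U_h^\delta(\bar s,\bar y)-\frac{|x-\bar y|^2}{\epsilon}-\frac{|t-\bar s|^2}{\tau}$ touches $u$ from below at $(\bar t,\bar x)$, so the monotone scheme \eqref{eq:the scheme} yields a discrete subsolution inequality and the definition of viscosity solution a supersolution inequality; subtracting the two is the heart of the argument.

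In the subtracted inequality one estimates term by term, using that the relevant test function is the quadratic penalization with $\|D^2(\cdot)\|_0\lesssim\epsilon^{-1}$: the zeroth-order $c^{\A,\B}u$ terms via $c^{\A,\B}\ge0$ and \ref{A2}; the drift terms via the $(t,x)$-Lipschitz continuity of $b^{\A,\B}$ and the upwind consistency \eqref{eq:error-drift}, costing $\lesssim\Delta x\,\Gamma(\sigma,\delta)\epsilon^{-1}$; the replacement of the singular L\'evy operator by its truncation at radius $\delta$ (Step~1 of the construction), costing $\lesssim\delta^{2-\sigma}\epsilon^{-1}$ by \eqref{eq:error-trunc}; the quadrature of the truncated operator, costing $\lesssim(\Delta x)^2\delta^{-\sigma}\epsilon^{-1}$ by \eqref{eq:nonlocal-error}; and the time discretization, costing $\lesssim\Delta t\,\tau^{-1}$. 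The remaining honest nonlocal contributions --- the operator $\mathcal I^{\A,\B}$ on the $u$-side (split at a suitable radius as in \eqref{viscosity:term2}) and $\mathcal J^{\A,\B,\delta}$ on the $U_h^\delta$-side, both applied to the quadratic penalization --- are handled by exploiting the $x\leftrightarrow y$ symmetry of $|x-y|^2/\epsilon$: the translates $x\mapsto x+\eta^{\A,\B}$ on the two sides combine so that, as in the first order maximum-principle computation, only a curvature term $\lesssim\epsilon^{-1}\int_{|z|<\delta}|\eta^{\A,\B}|^2\nu(dz)$ together with a drift-type mismatch (controlled via \ref{A3}) survive, and these are absorbed into the terms already listed. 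Collecting everything gives
\[
\sup_{\mathcal G_h^N}(U_h^\delta-u)\;\lesssim\;(1+T)\Bigl[\,\epsilon+\bar\omega(|\bar t-\bar s|)+\tfrac1\epsilon\bigl(\delta^{2-\sigma}+\Delta x\,\Gamma(\sigma,\delta)+(\Delta x)^2\delta^{-\sigma}\bigr)+\tfrac{\Delta t}{\tau}\,\Bigr],
\]
and optimizing over $\epsilon$ (balancing $\epsilon$ against the bracketed spatial errors over $\epsilon$) and over $\tau$ (balancing $\bar\omega$ at the $\tau$-controlled scale against $\Delta t/\tau$), together with $\delta\ge\Delta x$, yields exactly the three displayed regimes $\sigma<1$, $\sigma=1$, $\sigma\in(1,2)$; for $T>1$ one iterates over unit time intervals, which explains the $(1+T)$ and $(T\wedge1)^{\cdots}$ factors. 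The reverse bound on $u-U_h^\delta$ follows by the identical argument with sub- and supersolution roles exchanged, and the two together give the theorem.

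The step I expect to be the main obstacle is the treatment of the nonlocal terms in the subtracted inequality: splitting the integrals at the truncation radius $\delta$ (and, where needed, at the penalization scale), correctly matching the un-truncated operator $\mathcal I^{\A,\B}$ appearing in the viscosity inequality against the truncated and quadratured operators $\mathcal J^{\A,\B,\delta}$, $\mathcal J_h^{\A,\B,\delta}$ in the scheme, obtaining bounds uniform as $h\to0$, and finally choosing $\epsilon,\tau,\delta$ compatibly with the CFL constraint \eqref{eq:CFL-condtion} and the a priori moduli so that the optimized exponents come out precisely as stated. The local first order argument has no analogue of most of this bookkeeping, and it is exactly this bookkeeping that separates the three cases $\sigma<1$, $\sigma=1$, $\sigma\in(1,2)$.
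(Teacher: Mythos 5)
Your overall strategy (doubling of variables with quadratic penalizations in $x$ and $t$, a separate treatment of the initial layer via the time-regularity of $u$ and $U_h$, consistency errors weighted by $\|D^2\phi\|_0$, and a final balancing of the two penalization parameters) is indeed the skeleton of the paper's proof, and your idea of charging the truncation of the small jumps to the test function via \eqref{eq:error-trunc} (cost $\sim\gamma\delta^{2-\sigma}$, giving $\delta^{1-\frac\sigma2}$ after balancing) is an acceptable variant of the paper's cleaner two-step reduction through the continuous dependence estimate of Lemma \ref{lem_delta}. However, there is a genuine gap at exactly the step you flag as ``bookkeeping'': the claim that, by the $x\leftrightarrow y$ symmetry of the penalization, the large-jump parts of $\mathcal I^{\A,\B}$ on the $u$-side and $\mathcal J_h^{\A,\B,\delta}$ on the scheme side combine so that only a curvature term plus an absorbable ``drift-type mismatch'' survive. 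When $\eta^{\A,\B}$ depends on $(t,x)$, the maximum-point inequality applied to the shifted points yields, as in \eqref{eq:gen2}, the term
\begin{align*}
\int_{|z|>\delta}\Big\{\gamma\,(x_0-y_0)\cdot\big(\Eta(s_0,y_0;z)-\Eta(t_0-\Delta t,x_0;z)\big)+\tfrac\gamma2\big|\Eta(s_0,y_0;z)-\Eta(t_0-\Delta t,x_0;z)\big|^2\Big\}\,\nu(dz),
\end{align*}
and by \ref{A3} this is of size $\Gamma(\sigma,\delta)\,\gamma\,|x_0-y_0|\big(|x_0-y_0|+|t_0-s_0|+\Delta t\big)$, i.e.\ of order $\Gamma(\sigma,\delta)\big(\tfrac1\gamma+|t_0-s_0|+\Delta t\big)$ after using $|x_0-y_0|\lesssim 1/\gamma$. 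The same mechanism appears in the drift estimate \eqref{eq:gen1}. These contributions are \emph{not} absorbable into the terms you list: for $\sigma\ge1$ they are dominant, they appear explicitly in the paper's key inequality \eqref{final}, and after optimization they are precisely what produce the coupled terms $\Delta t^{\frac12}\delta^{1-\sigma}$ and $\Delta x^{\frac12}\delta^{1-\sigma}$ (and the $|\log\delta|$ factors at $\sigma=1$) in the statement of Theorem \ref{main:thm}.

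Concretely, your intermediate bound $\epsilon+\bar\omega(|\bar t-\bar s|)+\tfrac1\epsilon\big(\delta^{2-\sigma}+\Delta x\,\Gamma(\sigma,\delta)+(\Delta x)^2\delta^{-\sigma}\big)+\tfrac{\Delta t}\tau$ is missing the terms $\Gamma(\sigma,\delta)\big(\epsilon+|\bar t-\bar s|+\Delta t\big)$; optimizing what you wrote would give, for $\sigma\in(1,2)$, a spatial rate $\Delta x^{\frac12}\delta^{\frac{1-\sigma}2}$ and no $\Delta t$--$\delta$ coupling, i.e.\ essentially the rate of Theorem \ref{main:thm3}, which the paper shows is attainable only when $\eta$ is independent of $(x,t)$. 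That your sketch cannot distinguish the three cases of Theorems \ref{main:thm}--\ref{main:thm3} is the symptom of the missing terms. Two smaller points: at $\sigma=1$ the paper does not just balance with the modulus $\bar\omega(r)=r(1+|\ln r|)$ but mollifies the initial data (with parameter $\tilde\epsilon=\Delta t$) to restore Lipschitz time-regularity at cost $|\log\tilde\epsilon|$, which is where $\Delta t^{\frac12}|\log\Delta t|$ comes from; and on the scheme side one cannot simply read off a ``discrete subsolution inequality'' at the maximum point, because the scheme couples levels $n$ and $n-1$ --- the paper has to introduce the auxiliary function $W$ and the non-standard term $\J_h^{\A,\B,\delta}[U,W]$ to make the maximum-point inequality usable, a device your outline would also need.
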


\begin{rem}
(a) These results imply the convergence of the scheme, and {\it optimal}
error estimates for local first order Hamilton-Jacobi equations
(cf. \cite{CL:2approx,souganidis1985}) follows as a special case since
$\mathcal I^{\alpha,\beta}\equiv 0$ is allowed. This
also means that the rate in the case $\sigma\in(0,1)$ is optimal
because of the first order drift term in our equation.
\medskip

\noindent(b) The results for $\sigma\in[1,2)$ are also optimal. The
principal error term is  $\delta^{1-\frac\sigma2}$ since $\Delta
x\leq \delta$. This term comes from the truncation of the
singularity and is optimal in view of the low regularity of our
problem.  See \eqref{eq:error-trunc} for the rate for smooth solutions
and Lemma \ref{lem_delta} below for the rate under our assumptions.


\end{rem}

\begin{thm}[No $t$-dependence]\label{main:thm2}
Let the assumptions of Theorem \ref{main:thm} hold and
 $\eta^{\A, \B}$ be independent of $t$.
\medskip

\noindent (a) Then there is a constant $C$ such that for all $(t,x)\in
\mathcal{G}_h^N$, 

 \begin{align*}
   |U^\delta_h - u| \le  C(1+T)
 \begin{cases}(T\wedge1)^{\frac12}\bigg(\Delta t^{\frac12} + \Delta x ^{\frac12}+\delta^{1-\frac\sigma2}\bigg)&
  \text{if}\quad\sigma\in[0,1),\\[0.4cm] 
(T\wedge1)^{\frac1{2}}\bigg(\Delta
 t^{\frac1{2}}|\log \Delta t|+\Delta
 x^{\frac12}|\log \delta| +\delta^{\frac{1}{2}}\bigg)
&  \text{if}\quad \sigma=1,\\[0.4cm]
  (T\wedge1)^{\frac1{2\sigma}}\Delta t^{\frac1{2\sigma}} +
   (T\wedge1)^{\frac12}\Big(\Delta x^{\frac12} \delta^{1-\sigma} +
   \delta^{1-\frac{\sigma}{2}}\Big) 
&  \text{if}\quad \sigma\in(1,2).
\end{cases} 
 \end{align*}

\noindent(b) If $K(u_0)<\infty$ (cf. Theorem \ref{wellpos_1}), then
there is a constant $C$ such that for all $(x,t)\in \mathcal{G}_h^N$,
 \begin{align*}
  |U^\delta_h - u| \le  C(1+T) (T\wedge1)^{\frac12}\bigg(\Delta t^{\frac12} +
   \Delta x^{\frac12} \Gamma(\sigma, \delta) + \delta^{1-\frac{\sigma}{2}} \bigg)\qquad
   \text{if}\qquad \sigma\in[0,2). 
\end{align*}
All the constants $C$ only depend on the constants in \ref{A1}--\ref{A5}
and \eqref{eq:alternate-cfl}, and for (b), also on $K(u_0)$.   
\end{thm}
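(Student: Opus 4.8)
The plan is to follow the proof of Theorem~\ref{main:thm} and to isolate the two points at which the extra hypotheses ($\eta^{\A,\B}$ independent of $t$, resp.\ $K(u_0)<\infty$) improve the rate. The backbone is the ``first order'' error-estimate argument: bound $|U_h^\delta-u|$ through the two one-sided inequalities $u-U_h^\delta\le\cdots$ and $U_h^\delta-u\le\cdots$. For the first, one shakes the coefficients of \eqref{eqn:main} to obtain, for each $\epsilon\in(0,1)$, a viscosity subsolution $u_\epsilon$ of \eqref{eqn:main} with coefficients perturbed by $O(\epsilon)$ (using the Lipschitz bounds \ref{A2}, \ref{A3} and the regularity of $u$), then mollifies $u_\epsilon$ at scale $\epsilon$ to get a smooth $u^\epsilon$ with $\|D^ku^\epsilon\|_0\le C\epsilon^{1-k}$ and $\|u-u^\epsilon\|_0\le C\bar\omega(\epsilon)$ (Theorem~\ref{wellpos_1}(c)). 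The consistency estimates \eqref{eq:error-trunc}, \eqref{eq:error-drift}, \eqref{eq:nonlocal-error} then show that $u^\epsilon$ is a subsolution of the scheme \eqref{eq:the scheme} up to an error of order $\epsilon^{-1}\big(\delta^{2-\sigma}+\Delta x\,\Gamma(\sigma,\delta)+(\Delta x)^2\delta^{-\sigma}\big)$, plus time-discretization and time-mollification contributions, so that comparison for the scheme (Theorem~\ref{thm:existence-scheme}(a)) bounds $u-U_h^\delta$ by $C\|u_0-u_0^\epsilon\|_0$ plus $C\,t_n$ times that consistency error; optimizing in $\epsilon$ and interpolating the $t_n$-prefactor yields the bound, the reverse inequality being obtained symmetrically. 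The pure time-discretization error enters through the time modulus $\bar\omega$ of Theorem~\ref{wellpos_1}(c) evaluated at the natural scale $\sqrt{(T\wedge1)\,\Delta t}$, which is exactly why that part of the rate takes the three shapes $\Delta t^{\frac1{2\sigma}}$, $\Delta t^{\frac12}|\log\Delta t|$, $\Delta t^{\frac12}$.

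For part~(a), the decisive observation is that when $\eta^{\A,\B}=\eta^{\A,\B}(x;z)$, the operators $\mathcal{J}^{\A,\B,\delta}$, $\mathcal{D}_h^{\A,\B,\delta}$ and $b_\delta^{\A,\B}\cdot\nabla$ act on the spatial variable only, hence commute, uniformly in $(\A,\B)$, with convolution in the time variable; consequently the time-mollification step produces no extra error through the nonlocal term. Carrying this through the estimate of Theorem~\ref{main:thm} deletes exactly the term $\Delta t^{\frac12}\delta^{1-\sigma}$ when $\sigma\in(1,2)$ and the term $\Delta t^{\frac12}|\log\delta|$ when $\sigma=1$. Everything else — the truncation term $\delta^{1-\frac\sigma2}$, the drift term $\Delta x^{\frac12}\Gamma(\sigma,\delta)$, the pure time term, and the horizon factors $(T\wedge1)^{\frac12}$, $(T\wedge1)^{\frac1{2\sigma}}$, $(1+T)$ — is unchanged, giving the estimates of (a).

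For part~(b), assume in addition $K(u_0)<\infty$. Then Theorem~\ref{wellpos_1}(d) upgrades $\bar\omega$ to a genuine Lipschitz modulus in time, $|u(x,t)-u(y,s)|\le C(1+K(u_0))(|x-y|+|t-s|)$, and Theorem~\ref{thm:existence-scheme}(e) supplies the matching discrete bound near $t=0$; hence the shaking and mollification can be run with time-shifts of size $\epsilon$ and with $\|u-u^\epsilon\|_0\le C\epsilon$. Redoing the one-sided estimates with this better regularity — and using again that, $\eta$ being $t$-independent, the nonlocal term does not interact with the time-mollification — the only $\delta$-dependent factor multiplying $\Delta x^{\frac12}$ is the drift-discretization factor $\Gamma(\sigma,\delta)$ of \eqref{eq:error-drift}, while the singularity truncation still contributes $\delta^{1-\frac\sigma2}$ from \eqref{eq:error-trunc} and the time part collapses to $\Delta t^{\frac12}$; balancing $\epsilon$ at the square-root of the mesh, as in the first-order case, produces the single, $\sigma$-uniform bound $C(1+T)(T\wedge1)^{\frac12}\big(\Delta t^{\frac12}+\Delta x^{\frac12}\Gamma(\sigma,\delta)+\delta^{1-\frac\sigma2}\big)$.

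The hard part will not be any single estimate but making the shaking-plus-mollification machinery run under the non-convex $\inf_\A\sup_\B$ structure together with the CFL-constrained relation \eqref{eq:alternate-cfl} among $\Delta t$, $\Delta x$ and $\delta$: one must check that $u^\epsilon$ (and, in the other direction, the mollified $u$) is a genuine scheme sub/supersolution once all three consistency errors are absorbed simultaneously, and that the commutation of time-convolution with $\mathcal{J}^{\A,\B,\delta}$ survives passing to $\inf\sup$. A second delicate point is the bookkeeping of the horizon factors: the regularization scale that optimizes the $\Delta t$-part (governed by the time regularity in Theorem~\ref{wellpos_1}(c)/(d)) differs from the one optimizing the $\Delta x,\delta$-part, and this mismatch is precisely what produces the two distinct prefactors $(T\wedge1)^{\frac1{2\sigma}}$ and $(T\wedge1)^{\frac12}$.
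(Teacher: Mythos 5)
Your proposal rests on the Krylov shaking-of-coefficients plus mollification machinery: perturb the coefficients, mollify the perturbed solution at scale $\epsilon$, check that the smooth function is an approximate sub/supersolution of the scheme via the consistency estimates \eqref{eq:error-trunc}, \eqref{eq:error-drift}, \eqref{eq:nonlocal-error}, and conclude by comparison for the scheme, ``the reverse inequality being obtained symmetrically.'' This is where the genuine gap lies. The step in which the mollified function $u^\epsilon$ is again a subsolution of the (perturbed) equation relies on a Jensen-type inequality: a convex combination (space-time average) of the shifted subsolutions $u_\epsilon(\cdot-e)$ is a subsolution only when the nonlinearity is convex (Bellman structure, a pure $\sup$ or pure $\inf$). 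The equation here is a non-convex Isaacs equation with an $\inf_\A\sup_\B$, and for such equations this step simply fails; this is not a technical point to be ``checked,'' it is the known obstruction that the paper's introduction describes (convexity is crucial in the Krylov/Barles--Jakobsen line of work, and no general error estimates exist for non-convex equations by that route). The symmetric reverse inequality is even more problematic: without convexity one would have to regularize the scheme solution $U_h$, for which no suitable regularity is available. Your closing paragraph flags the $\inf\sup$ issue as ``the hard part,'' but the proposal offers no mechanism to overcome it, so the argument does not close.

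The paper's actual proof avoids shaking altogether and instead extends the first-order Crandall--Lions comparison technique: first reduce to the truncated measure $\nu_\delta$ by a continuous dependence estimate (Lemma \ref{lem_delta}, which produces the $T^{\frac12}\delta^{1-\frac\sigma2}$ term), then run a doubling-of-variables argument on $\mathcal G_h^N\times Q_T$ with the test function $\frac{\gamma}{2}|x-y|^2+\frac{\varepsilon}{2}(|x|^2+|y|^2)+\frac{\eta}{2}|t-s|^2$, using the viscosity supersolution inequality for $u^\delta$ at $(s_0,y_0)$ and the monotonicity of the scheme at the grid maximum point $(t_0,x_0)$, and finally optimizing the penalization parameters $\gamma,\eta$ (with $q=1$ or $q=\frac{\sigma}{2\sigma-1}$ according to the time regularity furnished by Theorem \ref{wellpos_1} (c)/(d) and Theorem \ref{thm:existence-scheme} (d)/(e)). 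In that framework the gain from $t$-independence of $\eta^{\A,\B}$ is not a commutation of the nonlocal operator with time-mollification, but the sharper bounds \eqref{eq:indepd-t-1} and \eqref{eq:indepd-t-2} on the terms $I_3$ and $J_2$: the differences $\eta^{\A,\B}(t_0-\Delta t,x_0;z)-\eta^{\A,\B}(s_0,y_0;z)$ lose their $|t_0-s_0|+\Delta t$ contribution, so the coupling term $\Gamma(\sigma,\delta)\,\Delta t$ disappears from \eqref{final2}, which is exactly what removes $\Delta t^{\frac12}\delta^{1-\sigma}$ (resp. $\Delta t^{\frac12}|\log\delta|$) from the rate; part (b) then follows from the same inequality with the Lipschitz-in-time exponent $q=1$ made available by $K(u_0)<\infty$. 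If you want to salvage your write-up, you would have to replace the shaking/mollification backbone by such a doubling argument (or supply a genuinely new idea to handle the $\inf\sup$), since the consistency-plus-comparison pipeline you describe cannot be justified for this non-convex equation.
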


The Theorem also holds for $\eta$ depending on $t$ if $\Delta t/\Delta x \leq K$.  
\begin{rem}
\noindent(a) Since $\eta$ depends on time, the convergence in $\Delta t$
and $\delta$ is coupled  in Theorem \ref{main:thm} for $\sigma\in[1,2)$! 
When $\eta$ does not depend on $t$, there is no coupling and a better rate
 by Theorem \ref{main:thm2} (a).\!\! 
\smallskip

\noindent (b) When $\sigma\geq1$, there is a
reduction of rate in $\Delta t$ because the solution of \eqref{eqn:main} no
longer is Lipschitz in $t$. However, assuming more regularity of the
initial data will make the solution $t$-Lipschitz again, and then we
get back the full rate $\frac12$ in Theorem \ref{main:thm2} (b).
\end{rem}

\begin{thm}[No $x,t$ dependence]\label{main:thm3}
The assumptions of Theorem \ref{main:thm} hold and $\eta^{\A, \B}$ is
independent of $x,t$. 
\medskip

\noindent(a) There is a constant $C$  such 
that for all $(t,x)\in \mathcal{G}_h^N$,
\begin{align*}
 \big|U^{\delta}_h(t,x) - u(t,x) \big| \leq C (1+T)
\begin{cases}
(T\wedge1)^{\frac1{2}}\bigg(\Delta
 t^{\frac1{2}}+\Delta
 x^{\frac12} +\delta^{1-\frac{\sigma}{2}}\bigg)
&  \text{if}\quad \sigma=[0,1),\\[0.4cm]
(T\wedge1)^{\frac1{2}}\bigg(\Delta
 t^{\frac1{2}}|\log \Delta t|+\Delta
 x^{\frac12} |\log \delta|^{\frac{1}{2}} +\delta^{\frac{1}{2}}\bigg)
&  \text{if}\quad \sigma=1,\\[0.4cm]
(T\wedge1)^{\frac1{2\sigma}}\Delta
 t^{\frac1{2\sigma}}+(T\wedge1)^{\frac12}\bigg(\Gamma(\sigma,\delta)^{\frac12}\Delta
 x^{\frac{1}{2}}
 +\delta^{1-\frac{\sigma}{2}}\bigg)
&  \text{if}\quad \sigma\in(1,2).
\end{cases} 
 \end{align*}

\noindent(b) If also $K(u_0)<\infty$ (cf. Theorem \ref{wellpos_1}), then
there is a constant $C$ such that for all $(t,x)\in \mathcal{G}_h^N$,
\begin{align*}
 \big|U^{\delta}_h(t,x) - u(t,x) \big| \leq C (1+T)(T\wedge1)^{\frac1{2}}\bigg(\Delta t^{\frac12} +
   \Gamma(\sigma,\delta)^{\frac12}\Delta x^{\frac12}+ \delta^{1-\frac{\sigma}{2}} \bigg)
 \end{align*}
All the constants $C$ only depend on the constants in \ref{A1}--\ref{A5}
and \eqref{eq:alternate-cfl}, and for (b), also on $K(u_0)$.   
\end{thm}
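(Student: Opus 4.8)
The plan is to prove Theorem~\ref{main:thm3} by the same doubling‑of‑variables argument used for Theorems~\ref{main:thm}--\ref{main:thm2}, namely the classical first‑order error‑estimate technique of \cite{CL:2approx,souganidis1985} adapted to the nonlocal setting. Convexity plays no role, so the non‑convex Isaacs structure is not an obstacle as long as the order is $<2$. Because the nonlinearity is an $\inf\sup$, I would not use a shaking/mollification step (which would need convexity) but compare $u$ and $U^\delta_h$ \emph{directly}, at a fixed near‑optimal pair $(\alpha,\beta)$; and because $U^\delta_h$ lives on $\mathcal{G}_h^N$, the doubling is set up so that the discrete variable is free to move on the lattice. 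In the present case the independence of $\eta^{\alpha,\beta}$ from $(x,t)$ produces two extra cancellations — the jumps on the two sides of the doubling match exactly, and the drift‑correction $b^{\alpha,\beta}_\delta$ of Step~1 is a constant vector — and these are what sharpen the rates relative to Theorem~\ref{main:thm}. I describe the bound $\sup(u-U^\delta_h)\le\dots$; the reverse bound is symmetric.

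First I would fix $\epsilon_1,\epsilon_2,\delta>0$ and $\lambda\ge0$ and study the maximum over $(t,x)\in[0,T]\times\R^N$ and $(t_n,y)\in\mathcal{G}_h^N$ of
$$
\Psi(t,x,t_n,y)=u(t,x)-U^\delta_h(t_n,y)-\lambda t-\frac{|x-y|^2}{2\epsilon_1}-\frac{(t-t_n)^2}{2\epsilon_2},
$$
with maximiser $(\bar t,\bar x,\bar t_n,\bar y)$. The usual penalization bounds give $|\bar x-\bar y|\le C\epsilon_1$, while $|\bar t-\bar t_n|$ is controlled through $\epsilon_2$ and the time‑modulus $\bar\omega$ of Theorem~\ref{wellpos_1}(c) for $u$ and Theorem~\ref{thm:existence-scheme}(d) for $U^\delta_h$. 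If $\bar t=0$, I would bound $\Psi(\bar t,\dots)$ directly using $U^\delta_h(0,\cdot)=u(0,\cdot)$ and these same moduli, which produces the initial‑slice part of the estimate and, after optimizing $\epsilon_2$ against $\bar\omega$, the $(T\wedge1)$‑factors. The substantive case is $\bar t,\bar t_n>0$: there a quadratic $\phi_1$ touches $u-\lambda t-\dots$ from above at $(\bar t,\bar x)$ and a quadratic $\psi$ touches (the lattice restriction of) $U^\delta_h+\frac{|\bar x-y|^2}{2\epsilon_1}$ from below at $(\bar t_n,\bar y)$, with $\nabla\phi_1(\bar x)=\nabla\psi(\bar y)=\tfrac{\bar x-\bar y}{\epsilon_1}$ and $D^2\phi_1=\tfrac1{\epsilon_1}I=-D^2\psi$.

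I would then write the viscosity subsolution inequality for $u$ at $(\bar t,\bar x)$ (splitting the nonlocal term at a radius $\kappa\ge\delta$), write the scheme equation at the grid point $(\bar t_n,\bar y)$, extract an inequality at a common $(\alpha,\beta)$, and subtract; monotonicity of the scheme turns the discrete drift into $\tilde b^{\alpha,\beta}_\delta\cdot\nabla\psi(\bar y)$ up to the consistency error \eqref{eq:error-drift}. The local terms give $C(\epsilon_1+|\bar t-\bar t_n|)$ plus, since $c^{\alpha,\beta}\ge0$, a term with the right sign to be absorbed; the consistency errors of Steps~1--3, \eqref{eq:error-trunc}, \eqref{eq:error-drift}, \eqref{eq:nonlocal-error}, contribute $\tfrac{C}{\epsilon_1}\big(\delta^{2-\sigma}+\Delta x\,\Gamma(\sigma,\delta)+\tfrac{(\Delta x)^2}{\delta^{\sigma}}\big)$; the time discretization contributes $\tfrac{C\Delta t}{\epsilon_2}$. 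The crux is the nonlocal remainder. The range $|z|<\kappa$ is controlled by the curvature $\tfrac1{\epsilon_1}$ of $\phi_1,\psi$, namely $\le\tfrac{C}{\epsilon_1}\int_{|z|<\kappa}|\eta^{\alpha,\beta}(z)|^2\nu(dz)\le\tfrac{C}{\epsilon_1}\kappa^{2-\sigma}$ by \ref{A3},\ref{A5}; for the range $|z|>\kappa$ one replaces the touching functions by $U^\delta_h$ and $u$ and uses the maximality of $\Psi$ at the shifted points $(\bar t,\bar x+\eta^{\alpha,\beta}(z))$, $(\bar t_n,\bar y+\eta^{\alpha,\beta}(z))$ (and, for the $\gi_h$‑quadrature, at $\bar x+x_{\bj},\bar y+x_{\bj}$) — since the jumps coincide the penalty $|x-y|^2$ does not change, so $U^\delta_h(\bar t_n,\bar y+\cdot)-U^\delta_h(\bar t_n,\bar y)\ge u(\bar t,\bar x+\cdot)-u(\bar t,\bar x)$ and this whole part has the favourable sign, up to the interpolation error of $\gi_h$, which is controlled by splitting also at an intermediate radius. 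For the same reason $b^{\alpha,\beta}_\delta$ is constant and cancels exactly between the discrete drift and the nonlocal term, so no uncancelled $\Gamma(\sigma,\delta)$‑size term survives. Altogether one reaches $\lambda\le$ (all of the above); choosing $\lambda$ to be this quantity forces $\bar t=0$, hence $\sup(u-U^\delta_h)\le\lambda T+(\text{initial‑slice bound})$. Finally I would optimize $\epsilon_1$ by balancing it against $\tfrac1{\epsilon_1}(\delta^{2-\sigma}+\Delta x\,\Gamma(\sigma,\delta))$, which (using $\Delta x\le\delta$) yields $\delta^{1-\sigma/2}+\Gamma(\sigma,\delta)^{1/2}\Delta x^{1/2}$, and $\epsilon_2$ by balancing $\tfrac{\Delta t}{\epsilon_2}$ through $\bar\omega$; the latter gives $\Delta t^{1/(2\sigma)}$ when $\sigma>1$ because $u$ is then only $\tfrac1\sigma$‑H\"older in time. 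This is part~(a); for part~(b), if $K(u_0)<\infty$ then $u$ and $U^\delta_h$ are Lipschitz in $t$ by Theorems~\ref{wellpos_1}(d) and \ref{thm:existence-scheme}(e), so $\bar\omega$ is replaced by $|\cdot|$ throughout and one recovers the full rate $\Delta t^{1/2}$ for every $\sigma\in(0,2)$.

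The main obstacle is exactly the nonlocal term. The penalty has Hessian of size $1/\epsilon_1$, so a crude bound on $\mathcal I^{\alpha,\beta}[\phi_1]$ produces an $O(1/\epsilon_1)$ contribution that would ruin the estimate; the argument survives only because of (i) the truncation below $\kappa$, which reduces this to $\kappa^{2-\sigma}/\epsilon_1$, and (ii) the exact cancellation of the remaining part between the $u$‑ and $U^\delta_h$‑sides by jump alignment — available here precisely because $\eta$ is translation invariant — together with the analogous cancellation of $b^{\alpha,\beta}_\delta$. The bookkeeping that makes the proof long is the simultaneous tracking of the interpolation, truncation, and drift‑discretization errors as functions of $\delta,\Delta x,\Delta t$ and their balancing against $\epsilon_1$, plus the coupling with the $\sigma$‑dependent time modulus; this is also what forces the three‑case form of the rates. (In Theorem~\ref{main:thm} the jumps $\eta^{\alpha,\beta}(\bar t,\bar x;z)$ and $\eta^{\alpha,\beta}(\bar t_n,\bar y;z)$ differ by $O(\rho(z)(|\bar x-\bar y|+|\bar t-\bar t_n|))$, leaving an uncancelled term proportional to $\int\rho(z)\,\nu(dz)$ that degrades the rate — which is why that case is worse.)
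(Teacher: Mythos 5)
Your proposal is correct in substance and follows the same basic strategy as the paper -- a doubling-of-variables argument in which the maximum-point inequality is applied at points shifted by the \emph{same} jump on both sides (possible precisely because $\eta^{\A,\B}$ is $(x,t)$-independent), so that the quadratic penalty does not see the jump and only the interpolation error $(\Delta x)^2\delta^{-\sigma}/\epsilon_1$ and the drift-consistency error $\Gamma(\sigma,\delta)\Delta x/\epsilon_1$ survive, followed by the same optimization in the penalization parameters. The one genuine difference is how the singular part of $\nu$ is handled: the paper first passes to the truncated equation \eqref{eqn:main-perturbed-1} and pays $C T^{1/2}\delta^{1-\frac{\sigma}{2}}$ once and for all through the continuous-dependence bound of Lemma \ref{lem_delta} (Section \ref{sec:red}), and only then doubles variables, reaching \eqref{final3}; you instead keep $u$ as a solution of the full equation, split the nonlocal term at $\kappa\geq\delta$ inside the viscosity inequality, bound the small-jump part by $\kappa^{2-\sigma}/\epsilon_1$ via the test function's Hessian, and fold the resulting $\gamma\delta^{2-\sigma}$-type term into the choice of $\gamma$. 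This is exactly the variant the paper itself runs in Section \ref{sec:higher-order} (``without doing the truncation step in Section \ref{sec:red} first''), and it yields the same rates here, so it is a legitimate alternative route. A few points you gloss over deserve care: (i) the scheme \eqref{eq:the scheme} has explicit parts ($\theta,\vartheta<1$), and replacing $U_h$ by the touching function on the scheme side is not a one-line use of monotonicity -- the paper needs the comparison function $W$, the CFL condition, and the non-standard term $\tilde{\mathcal J}_h^{\A,\B,\delta}[U,W]$ keeping $U^{p}_{\bk}$ rather than $U^{p-1}_{\bk}$ inside the quadrature; (ii) the $\sigma=1$ rate $\Delta t^{1/2}|\log\Delta t|$ is obtained in the paper by mollifying $u_0$ and using $K(u_0^{\tilde\epsilon})\sim|\log\tilde\epsilon|$ with $\tilde\epsilon=\Delta t$, not by a direct use of the log-modulus (your sketch would need to verify the log bookkeeping, including the $|\log\delta|^{1/2}\Delta x^{1/2}$ term); (iii) your statement that ``no uncancelled $\Gamma(\sigma,\delta)$-size term survives'' is overstated -- the upwind consistency error \eqref{eq:error-drift} (through $\|\tilde b_\delta^{\A,\B}\|_0\lesssim\Gamma(\sigma,\delta)$) and the quadrature error \eqref{eq:nonlocal-error} do survive at size $\Gamma(\sigma,\delta)\gamma\Delta x$, and it is exactly this term that produces $\Gamma(\sigma,\delta)^{1/2}\Delta x^{1/2}$ in the final rate; your own balancing includes it, so this is loose phrasing rather than an error; (iv) on the unbounded domain you need a vanishing localization term such as $\frac{\varepsilon}{2}(|x|^2+|y|^2)$ so the maximum of $\Psi$ is attained, as in the paper.
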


The proofs these results are given in Section \ref{proof}.

\begin{rem}\label{optd-rem}
(a) Our estimates hold for solutions that are merely Lipschitz in $x$ and
Lipschitz or H\"older in $t$. In general this is the best regularity
for our problem under our assumptions.
For more regular solutions, better estimates should hold. However, the
maximal rate or accuracy of our scheme is $O(\Delta x^{1 \wedge (2-\sigma)}).$
The dominant error term comes from truncation of the measure
(cf. \eqref{eq:error-trunc}, 
\eqref{eq:error-drift}, \eqref{eq:nonlocal-error} and recall that
$\Delta x\leq\delta$). 
\medskip

\noindent(b)  The choices of $\delta$ that optimize the
error are $\delta=\Delta x \ \text{for}\ \sigma\in(0,1)$ and when $\sigma\in(1,2)$ that are $\delta=\max(\Delta t^{\frac1\sigma},\Delta x^{\frac1\sigma})$ in Theorem \ref{main:thm},  $\delta= \Delta x^{\frac{1}{\sigma}}$ in Theorem \ref{main:thm2} and $\delta= \Delta x$ in Theorem \ref{main:thm3}.   
Assume now $K(u_0)=\infty$. When $\sigma\leq1$, all cases
then lead to the estimate
\begin{align*}
 |u- U_h^\delta| \leq C
\left\{  
		\begin{array}{ll}
		 (\Delta t)^{\frac{1}{2}} + (\Delta x)^{\frac{1}{2}} & \quad \mbox{when} \quad \sigma \in[0,1),\\[0.2cm]
		 (\Delta t)^{\frac{1}{2}}| \log ( \Delta t)| + (\Delta x)^{\frac{1}{2}} | \log ( \Delta x)| & \quad \mbox{when} \quad \sigma =1.
		\end{array}
\right.
\end{align*}
However the estimates for $\sigma\in(1,2)$ are different in each case:
 \begin{align*}
 |u- U_h^\delta| \leq C
\left\{  
		\begin{array}{ll}
		 (\Delta t)^{\frac{2-\sigma}{2\sigma}} + ( \Delta
                  x)^{\frac{2-\sigma}{2\sigma}} & \quad \mbox{in
                    Theorem \ref{main:thm}}, \\[0.2cm]
		 (\Delta t)^{\frac{1}{2\sigma}} + ( \Delta x)^{\frac{2-\sigma}{2\sigma}} & \quad \mbox{in
                    Theorem \ref{main:thm2} (a)}, \\[0.2cm]
		 (\Delta t)^{\frac{1}{2\sigma}} + ( \Delta x)^{\frac{2-\sigma}{2}} & \quad  \mbox{in
                    Theorem \ref{main:thm3} (a)}.
		\end{array}
\right.
 \end{align*}
Note the improvement in rate in each line! 
When $K(u_0)<\infty$, the solution is Lipschitz in $t$, and the time
rate improves to $O(\Delta t^{\frac12})$ in Theorems \ref{main:thm2}
and \ref{main:thm3}. In particular, the rate of Theorem
\ref{main:thm3} (b) becomes
$$O\Big((\Delta t)^{\frac12}+( \Delta
                  x)^{\frac{2-\sigma}{2}}\Big).$$
This latter spatial rate is
consistent with the rates (for the implicit scheme) of Theorem 6.3 in \cite{CJ2011} where other
types of $(x,t)$-independent nonlocal nonlinear equations are considered. 
\end{rem}

 \section{Proof  of the main results - Theorems \ref{main:thm}, \ref{main:thm2} and
   \ref{main:thm3}} \label{proof}

\subsection{Reduction to finite L\'{e}vy measures}
\label{sec:red}

Since the two problems \eqref{eqn:main} and
\eqref{eqn:main-perturbed-1}  have the same data and coefficients
except for the L\'{e}vy measures $\nu$ and $\nu_\delta$, we can use the continuous
dependence results of \cite{Jakobsen:2005jy} to bound  the
distance between $u$ and $u^{\delta}$.
\begin{lem}\label{lem_delta}
Assume \ref{A1}--\ref{A5}. If $u$ and $u^{\delta}$ solve
\eqref{eqn:main} and \eqref{eqn:main-perturbed-1}, then 
 \begin{align*}
|u(t,x)- u^{\delta}(t,x)| \leq  C
                            T^{\frac12}\delta^{1-
                            \frac{\sigma}{2}}\qquad\text{for all}\qquad (t,x)\in Q_T.
\end{align*}
\end{lem}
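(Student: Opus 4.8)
The plan is to apply the continuous dependence estimate of \cite{Jakobsen:2005jy} to the two problems \eqref{eqn:main} and \eqref{eqn:main-perturbed-1}, which are identical except that the L\'evy measure $\nu$ in the nonlocal term is replaced by the truncated measure $\nu_\delta = \mathbf{1}_{|z|>\delta}\nu$. Since the drift, zeroth order coefficient, source term, and initial data all coincide, the only contribution to the continuous dependence bound comes from the discrepancy between the two nonlocal operators. The standard form of such estimates (e.g.\ Theorem 3.1/5.1 of \cite{Jakobsen:2005jy}) controls $\|u-u^\delta\|_0$ on $[0,T]$ by a quantity of the form $C\,T^{1/2}$ times the square root of $\sup_{x,\alpha,\beta}\int (\text{something quadratic in }\eta)\,|\nu-\nu_\delta|(dz)$, exploiting that the known regularity of $u$ (and $u^\delta$) is Lipschitz in $x$, Lipschitz/H\"older in $t$, uniformly in $\delta$ by Theorem \ref{wellpos_1}(c) and the $\delta$-uniform bounds from Step 1.

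First I would record that $\nu - \nu_\delta = \mathbf{1}_{|z|\le\delta}\,\nu$, so that the ``distance'' between the two nonlocal terms is measured only by the mass of $\nu$ on the small ball $B(0,\delta)$, weighted by $|\eta^{\alpha,\beta}(t,x;z)|^2$. Next I would invoke the continuous dependence result in the form
\[
\|u-u^\delta\|_{L^\infty(Q_T)} \le C\,T^{\frac12}\Big(\sup_{t,x,\alpha,\beta}\int_{|z|\le\delta}|\eta^{\alpha,\beta}(t,x;z)|^2\,\nu(dz)\Big)^{\frac12},
\]
which is legitimate because both $u$ and $u^\delta$ satisfy the same Lipschitz-in-$x$ bound with a constant independent of $\delta$ (the coefficients and solutions are bounded, so there is no growth in $x$). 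Finally I would estimate the integral using \ref{A3} and \ref{A5}: $|\eta^{\alpha,\beta}(t,x;z)|\le\rho(z)\le K|z|$ for $|z|<1$, and $\nu$ has density bounded by $C|z|^{-M-\sigma}$ there, so
\[
\int_{|z|\le\delta}|\eta^{\alpha,\beta}(t,x;z)|^2\,\nu(dz)\le K^2C\int_{|z|\le\delta}|z|^{2-M-\sigma}\,dz = K'\,\delta^{2-\sigma}
\]
by passing to polar coordinates (this converges since $\sigma<2$). Substituting gives $\|u-u^\delta\|_0\le C\,T^{1/2}\,(\delta^{2-\sigma})^{1/2}=C\,T^{1/2}\delta^{1-\sigma/2}$, which is the claim.

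The main obstacle is not any of the routine integral estimates but rather citing the continuous dependence result in exactly the right form: one needs the version that tracks the dependence on the perturbation of the L\'evy measure (not just on perturbations of $b,c,f$), that yields the $T^{1/2}$ prefactor and the square-root scaling in the measure-defect, and that applies at the regularity level we actually have here — merely Lipschitz-in-$x$, possibly only H\"older-in-$t$ solutions. I would check that the hypotheses of the relevant theorem in \cite{Jakobsen:2005jy} are met uniformly in $\delta$ (in particular that $\nu_\delta$ still satisfies the L\'evy integrability condition \ref{A4}, which is immediate since $\nu_\delta\le\nu$, and that $\eta^{\alpha,\beta}$ satisfies \ref{A3} independently of the truncation), and that the constant $C$ produced depends only on the data in \ref{A1}--\ref{A5}. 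Once the correct citation is in place, the remaining computation is the short polar-coordinates estimate above.
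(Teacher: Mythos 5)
Your proposal is correct and follows essentially the same route as the paper: invoke the continuous dependence estimate of \cite{Jakobsen:2005jy} (the paper uses its Corollary 3.2, exploiting boundedness of the coefficients to avoid growth in $x$) to reduce the bound to the square root of $\sup_{t,x,\alpha,\beta}\int|\eta^{\alpha,\beta}|^2\,|\nu-\nu_\delta|(dz)$, and then estimate that integral by $C\delta^{2-\sigma}$ via \ref{A3}--\ref{A5}. The only difference is the precise citation within \cite{Jakobsen:2005jy}, which does not affect the argument.
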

\begin{proof}
In a similar way as Theorem 4.1 in \cite{Jakobsen:2005jy} follows from Corollary
3.2 in \cite{Jakobsen:2005jy}, we can use Corollary
3.2 in \cite{Jakobsen:2005jy} and the fact that all coefficients are
bounded to show that
$$|u(t,x)- u^{\delta}(t,x)| \leq C T^{\frac12} \sqrt{\int_{\R^M \setminus
                            \{0\}} |\Eta(t,x;z)|^2 \ |\nu -
                            \nu_{\delta}|(dz)}.$$
Note that as opposed to Theorem 4.1 in \cite{Jakobsen:2005jy}, there is
no growth in $x$ in our estimate. The result then follows by
\ref{A3}--\ref{A5} and $\int_{|z|<\delta}|z|^2\nu(dz)= C\delta^{2-\sigma}$.
\end{proof} 
In view of the result, it is enough to prove Theorem \ref{main:thm}
when the L\'evy measure $\nu$ is replaced by the bounded measure
$\nu_\delta$. Therefore, in the rest of the proof we only work with
$u=u^\delta$, the solution of
\eqref{eqn:main-perturbed-1}--\eqref{int-cond}.

\subsection{The doubling of variables argument}
\label{sec:dbl}

 Recall that $U_h$ is defined on $\mathcal{G}_h^N$ as
 $U_h(t_n, x_{\bj})= U_{\bj}^n$, and $u=u^\delta$ solves \eqref{eqn:main-perturbed-1}--\eqref{int-cond}. 
We want to  bound $|U_h(t_n, x_{\bj})- u(t_n, x_{\bj})|$ in $\mathcal{G}_h^N$ and start by deriving a nonnegative upper
 bound on
\begin{align*}
\mu = \sup_{\bj \in \zn, \, n\leq M} (U^n_{\bj} - u(t_n, x_{\bj})).
\end{align*}
Assume that $\mu > 0$ (if not $\mu\leq 0$ and we are done).
Since $u$ and $U_h$ are bounded uniformly in $h$,
$$R:=\max\{\|u\|_{L^\infty}, \|U_h\|_{L^\infty}\}<\infty.$$ 
We will use the method of doubling of variables (e.g
\cite{CL:2approx}) and to do that we introduce $\Psi
:\mathcal{G}_h^N\times  Q_T  \rightarrow \R$, 
\begin{align*} 
\Psi(t,x,s,y) = U_h(t,x)- u(s,y) -
  \phi(x,y) - \xi(t,s)  -\frac{\mu}{4T} (t+s), 
\end{align*}
where $\phi : \rn\times \rn \rightarrow \R$
and $\xi : [0,T]\times [0,T] \rightarrow \R$ are defined by
\begin{align*}
 \phi(x,y) =  \frac{\gamma}{2} |x-y|^2 + \frac{\varepsilon}{2}(|x|^2 + |y|^2) \qquad\text{and}\qquad
 \xi(t,s)  =  \frac{\eta}{2}|t-s|^2,
 \end{align*}
 for $\gamma ,\eta, \varepsilon>0$. From the boundedness of $U_h$ and $u$, it follows that $\Psi$ has a
 maximum at $(t_0,x_0,s_0,y_0)\in\mathcal{G}_h^N\times Q_T$ such that
 \begin{align}
 \label{max_pt}
 \Psi(t_0,x_0,s_0,y_0) \geq \Psi(t,x,s,y)
 \end{align}
 for any $(t,x,s,y) \in \mathcal{G}_h^N\times Q_T$. Since  $0=
 \Psi(0,0,0,0)\leq \Psi(t_0,x_0,s_0,y_0)$, it follows that
\begin{align*}
 \frac{\gamma}2 |x_0-y_0|^2 + \frac\varepsilon2 (|x_0|^2 +
 |y_0|^2) + \frac{\eta}2|t_0-s_0|^2\leq U_h(t_0,x_0) - u(s_0,y_0),
\end{align*}
and hence  $U_h(t_0, x_0) - u(s_0,y_0) \geq0$ and
\begin{align}
  \label{eq:doublinboundsg-}
 \varepsilon (|x_0|^2 + |y_0|^2) \leq 4R.
 \end{align}
Moreover, since the map
$y \rightarrow u(s_0,y) + \phi(x_0,y) $
has a minimum at $y_0$ and $u$ is Lipschitz, 
$\phi(x_0, y_0) - \phi(x_0, y)  \leq u(s_0, y ) - u(s_0,y_0)\leq L
                                |y-y_0|$,
and hence $|D_y \phi(x_0,y_0)| \leq L$. Then by the definition of
$\phi$, and since $\varepsilon|y_0|\leq \sqrt{4R\varepsilon}$
by the last bound in \eqref{eq:doublinboundsg-}, we have
\begin{align}\label{esti_1}
|x_0 - y_0| \leq \frac{1}{\gamma}(L+\sqrt{4R\varepsilon}).
\end{align} 
By the inequality $\Psi(t_0,x_0,t_0,y_0)\leq  \Psi(t_0,x_0,s_0,y_0)$
and the regularity of $u$ in Theorem \ref{wellpos_1},
we find that
 \begin{align}\label{esti_00}
  \frac{\eta}2|t_0-s_0|^2\leq u(t_0,y_0) - u(s_0,y_0)\leq K\omega(t_0-s_0)
 \end{align}
where $\omega(r)=|r|$ if $K(u_0)<\infty$ and $\omega=\bar\omega$ from
Theorem \ref{wellpos_1} (c) if not. For $\sigma\neq1$, we get that
\begin{align}
\label{esti_2} |t_0 - s_0| & \leq \frac{2K}{\eta^{q}}
\end{align}
with $q=1$ when $K(u_0)<\infty$ and $u$ is Lipschitz in $t$ and
$q=\frac{\sigma}{2\sigma-1}$ when $\sigma\in(1,2)$ and
$u$ is H\"older $\frac1\sigma$ in $t$.

If either $t_0$ or $s_0$ is $0$, then we get a bound on $\mu$ using
only the regularity of the $u$ and $U_h$ at $t=0$. If $s_0 = 0$ and
$t_0>0$, then for any point $(t,x)\in \mathcal{G}_h^N$,  
\begin{align*}
&U_h(t,x)  - u(t,x) - \varepsilon|x|^2 -\frac{\mu}{2T}t=\Psi (t,x,t,x)\leq \Psi (t_0,x_0,0,y_0)\\
& = U_h(t_0, x_0)- u_0(y_0) - \phi(x_0,y_0) - \xi(t_0,0)
-\frac{\mu}{4T} t_0\leq  U_h(t_0, x_0)- u_0(y_0).
\end{align*} 
If either \ref{A5} holds with $\sigma\in(0,1)$ or $K(u_0)<\infty$,
then $u$ and $U_h$ are Lipschitz in $t$ at $t=0$. By Theorem
\ref{thm:existence-scheme} (e) and the regularity of $u_0$, 
$U_h(t_0, x_0) -u_0(x_0) + u_0(x_0)- u_0(y_0) 
\leq C ( t_0+ |x_0 - y_0|)$.
Hence by estimates  \eqref{esti_1} and \eqref{esti_2} with $q=1$ and since
$t_0=|t_0-s_0|$, we find that
$U_h(t,x) - u(t,x) -\varepsilon |x|^2 - \frac{\mu}{2T}t \leq
C(\frac{1}{\gamma}+\frac1{\eta})$. If we first send $\varepsilon$ to $0$
and then take the supremum over $\mathcal{G}_h^N$, by
  the definition of $\mu$ we get that 
\begin{align*}   
\frac{\mu}{2} \le  \sup_{\bj \in \Z^N , n\leq
  M} (U^n_{\bj} - u(t_n, x_\bj)) -\frac{\mu}{2} \leq
  C\Big(\frac{1}{\gamma}+\frac1{\eta}\Big).  
 \end{align*}
When $\sigma \in (1,2)$,  $u$ and $U_h$ are only H\"older $\frac1\sigma$ in $t$ at $t=0$
(cf. Theorem \ref{wellpos_1} (c) and Theorem \ref{thm:existence-scheme} (d)). In this case e.g.
$U_h(t_0,x_0)-u_0(y_0)\leq C(t_0^{\frac1\sigma}+|x_0-y_0|)$, and hence by
\eqref{esti_1} and \eqref{esti_2} with $q=\frac\sigma{2\sigma-1}$, we find that
\begin{align*} 
\frac{\mu}{2} \leq   C\Big(\frac{1}{\gamma}+\frac1{\eta^{\frac1{2\sigma-1}}}\Big).
\end{align*}
A similar argument using time regularity of $u$, shows that
these bounds also hold when $t_0=0$ and $s_0\geq0$.

Only the case $t_0>0$ and $s_0 >0$ remains. Here we have to use the
equations and the argument is long so we divide it into several steps. 

\medskip
\noindent\textbf{Step 1:} It is easily seen from \eqref{max_pt} that
$(s_0,y_0)$ is a global minimum point on $Q_T$ of   
$$(s,y) \rightarrow u(s,y) -\Big(-\phi(x_0,y) - \xi(t_0,s) - \frac{\mu}{4T}(t_0+s)\Big). $$
By the supersolution inequalities for $u$
(cf. \eqref{eqn:main-perturbed-1}) with
  $\kappa=\delta$,
\begin{align} \label{sup_soln}
 - D_s \xi(t_0, s_0) - \frac{\mu}{4T}+\inf_{\A}\sup_{\B} \bigg\{&
                                                                     -f^{\A,\B}(s_0,y_0) +c^{\A,\B}(s_0,y_0) u(s_0,y_0)\notag \\
& - \tilde b^{\A,\B}_\delta(s_0,y_0)  (-D_y
                                                                     \phi(x_0,y_0))   -\mathcal{J}^{\A,\B,\delta}[u](s_0,y_0) \bigg\} \geq 0 . 
\end{align}
  We now get an analogous relation for the scheme at the
  grid-point $(t_0,x_0)$. By \eqref{max_pt} again
  $\Psi(t,x,s_0,y_0)\leq \Psi(t_0,x_0,s_0,y_0)$, and hence the function
\begin{align*}
W(t,x):=U_h(t_0,x_0)+ \phi(x,y_0) - \phi(x_0,y_0) +\xi(t,s_0 )- \xi(t_0,s_0 ) + \frac{\mu}{4T}(t-t_0)
\end{align*}
satisfies 
\begin{align*}
U_h\leq W\quad\text{in}\quad \mathcal{G}^N_h \qquad \text{and}\qquad U_h(t_0,x_0)= W(t_0,x_0).
\end{align*}
By the definition and monotonicity of the scheme (under the CFL
condition \eqref{eq:CFL-condtion}) we then get at the maximum
point $(t_0, x_0)= (p\Delta t, \Delta x\, \bk)$ that
\begin{align*}
\notag U^{p}_{\bk} &= U^{p-1}_{\bk} -\Delta t \inf_{\A}\sup_{\B} \left\{- f^{\A,\B,p}_{\bk}+c^{\A,\B,p}_{\bk} U^{p-1}_{\bk}  -\theta \mathcal{D}_h^{\A,\B,\delta}[U]^{p}_{\bk} - (1-\theta)\mathcal{D}_h^{\A,\B,\delta}[U]^{p-1}_{\bk} \right. \notag \\
& \hspace*{6cm} \left.   - \vartheta \mathcal{J}^{\A,\B,\delta}_h[U]^{p}_{\bk}
  -(1-\vartheta)\mathcal{J}^{\A,\B,\delta}_h[U]^{p-1}_{\bk}
\right\}\notag\\
&\leq  W^{p-1}_{\bk} -\Delta t \inf_{\A}\sup_{\B} \left\{- f^{\A,\B,p}_{\bk}+c^{\A,\B,p}_{\bk} W^{p-1}_{\bk}  -\theta \mathcal{D}_h^{\A,\B,\delta}[W]^{p}_{\bk} - (1-\theta)\mathcal{D}_h^{\A,\B,\delta}[W]^{p-1}_{\bk} \right. \notag \\
& \hspace*{6cm}\left.
   - \vartheta \mathcal{J}^{\A,\B,\delta}_h[U]^{p}_{\bk}
  -(1-\vartheta)\J_h^{\A,\B,\delta}[U,W]^{p-1}_{\bk}
\right\}\notag
\end{align*}
where
\begin{align*}
\J_h^{\A,\B,\delta}[U,W]^{p-1}_{\bk} =  \sum_{\bj \in \zn} \left[ U^{p-1}_{\bk+ \bj}-W^{p-1}_{\bk} \right]\int_{|z| \geq \delta} \omega_{\bj}\left( \Eta(t_0-\Delta t, x_0; z);h\right) \ \nu(dz)
\end{align*}
and this non-standard term is admissible by the monotonicity of the
full scheme in the $U^{p-1}_\bk$-argument. We will see later that we
really need the term in this form.
By definition of $W$,
$\mathcal{D}^{\A,\B}_h[W]=\mathcal{D}^{\A,\B}_h[\phi(.,y_0)]$ etc.,
and we
divide by $\Delta t$ and rewrite the above inequality as
\begin{align}\label{esti_4}
\frac{\mu}{4T}  \leq &\ \frac{\xi(t_0-\Delta t,s_0)-
  \xi(t_0,s_0)}{\Delta t} -   \inf_{\A} \sup_{\B} \bigg\{ -f^{\A,\B,p}_{\bk} +c^{\A,\B,p}_{\bk} \Big[ U^p_{\bk} + \xi(t_0-\Delta t, s_0)  -\xi(t_0, s_0)- \frac{\mu}{4T}\Delta t \Big]  \notag\\
&\hspace*{0.5cm}\left. - \theta \mathcal{D}^{\A,\B,\delta}_h[\phi(.,y_0)](t_0,x_0)- (1-\theta) \mathcal{D}^{\A,\B,\delta}_h[\phi(.,y_0)](t_0-\Delta t,x_0)\right. \notag\\
& \hspace*{0.5cm}- (1-\vartheta) \J_h^{\A,\B,\delta}[U_h,W](t_0-\Delta t,x_0)- \vartheta
\mathcal{J}_h^{\A,\B,\delta}[U_h](t_0,x_0)  \bigg\}.
 \end{align}
Subtracting inequalities \eqref{sup_soln} and \eqref{esti_4} and
using the fact that $\inf\sup f- \inf\sup g\leq \sup\sup (f-g)$,
\begin{align}\label{diff}
\frac{\mu}{2T}  \leq &\ \frac{\xi(t_0-\Delta t,s_0)-
  \xi(t_0,s_0)}{\Delta t}- D_s \xi(t_0, s_0)\notag\\
& +\sup_{\alpha}\sup_{\beta}\bigg\{f^{\A,\B,p}_{\bk}
-f^{\A,\B}(s_0,y_0)- c^{\A,\B,p}_{\bk} \Big[ U^p_{\bk} +
\xi(t_0-\Delta t, s_0) - \frac{\mu}{4T}\Delta t  -\xi(t_0,
s_0)\Big]+c^{\A,\B}(s_0,y_0) u(s_0,y_0) \notag\\
&\qquad\qquad\quad +\theta \mathcal{D}^{\A,\B,\delta}_h[\phi(.,y_0)](t_0,x_0)+ (1-\theta)
\mathcal{D}^{\A,\B,\delta}_h[\phi(.,y_0)](t_0-\Delta t,x_0) -
\tilde b^{\A,\B}_\delta(s_0,y_0)  (-D_y \phi(x_0,y_0))\notag\\
&\qquad\qquad\quad +(1-\vartheta) \J_h^{\A,\B,\delta}[U_h,W](t_0-\Delta t,x_0) + \vartheta
\mathcal{J}_h^{\A,\B,\delta}[U_h](t_0,x_0) -\mathcal{J}^{\A,\B,\delta}[u](s_0,y_0) \bigg\}\notag\\
=&\ I_1 + \sup_{\alpha}\sup_{\beta}\bigg\{I_2+ I_3 +I_4\bigg\}.
\end{align}
\smallskip

\noindent\textbf{Step 2:}
We now estimate the terms $I_1,I_2,I_3$ in
\eqref{diff}. First note that    
$\xi(t_0-\Delta t,s_0 ) - \xi(t_0,s_0 )=- \partial_t \xi(t_0,s_0)\Delta t  +  \frac \eta2 \Delta t^2$,
and hence since $\partial_t \xi=-\partial_s \xi$ 
$$I_1=\frac{\xi(t_0-\Delta t,s_0)-
  \xi(t_0,s_0)}{\Delta t}- \partial_s \xi(t_0, s_0)= \frac \eta2 \Delta t.$$
We estimate $I_2$ using $c\geq 0$, $U^p_{\bk}-u(s_0,y_0)\geq0$,
regularity of the coefficients $c$ and $f$, the estimate on $I_1$, and the
bounds on $|x_0-y_0|$ and $|t_0-s_0|$,
\begin{align}
\notag I_2&=-c^{\A,\B,p}_{\bk} \Big[ U^p_{\bk} +
\xi(t_0-\Delta t, s_0) - \frac{\mu}{4T}\Delta t  -\xi(t_0,
s_0)\Big]+c^{\A,\B}(s_0,y_0) u(s_0,y_0) + f^{\A,\B}(t_0,x_0) - f^{\A,\B}(s_0,y_0) \\
\notag &\leq 0 + | u(s_0,y_0)||c^{\A,\B,p}_{\bk}-c^{\A,\B}(s_0,y_0)|+K \Big(|\xi(t_0-\Delta t, s_0)   -\xi(t_0,
s_0)|+ \frac{\mu}{4T}\Delta t\Big) + \big|  f^{\A,\B}(t_0,x_0) - f^{\A,\B}(s_0,y_0)\big|\\
&\leq C\Big(|x_0-y_0|+|t_0-s_0|+ \Delta t +\eta\Delta t^2\Big).\label{eq:bound-on-I2}
\end{align}
We now estimate $I_3$. By the consistency estimate \eqref{eq:error-drift}, the definition
of $\tilde b^{\A,\B}_\delta$, the time regularity and bounds on $b$ and $\eta$ and
integrability assumptions \ref{A2}--\ref{A5} of $\nu$, the definition and gradient
bound of $\phi$,  
\begin{align*}
& \theta \mathcal{D}^{\A,\B,\delta}_h[\phi(.,y_0)](t_0,x_0)+ (1-\theta)
\mathcal{D}^{\A,\B,\delta}_h[\phi(.,y_0)](t_0-\Delta t,x_0)\\
&\leq \left(\theta \tilde b^{\A,\B}_\delta(t_0,x_0)+ (1-\theta)\tilde b^{\A,\B}_\delta(t_0-\Delta t,x_0)
\right)\cdot D_x\phi(x_0,y_0) + C\|\tilde
  b^{\A,\B}_\delta\|_0\|D^2\phi\|_0\Delta x \\ 
&\leq   \tilde b^{\A,\B}_\delta(t_0,x_0) \cdot
  D_x\phi(x_0,y_0)+C\Big(1+\int_{|z|>\delta}\rho(z)\nu(dz)\Big)\Big((1-\theta)L\Delta t+(\gamma+\varepsilon)\Delta x
\Big).
\end{align*}
Hence since $D_x\phi=-D_y\phi +\varepsilon (x+y)$ and $b$ is Lipschitz continuous,
by the maximum point estimates, the definition
of $\tilde b^{\A,\B}_\delta$,
and the Lipschitz bound on $\phi$,
\begin{align}\notag
I_3&=\theta \mathcal{D}^{\A,\B,\delta}_h[\phi(.,y_0)](t_0,x_0)+ (1-\theta)
\mathcal{D}^{\A,\B,\delta}_h[\phi(.,y_0)](t_0-\Delta t,x_0) -
\tilde b^{\A,\B}_\delta(s_0,y_0) \cdot (-D_y \phi(x_0,y_0))\\
\notag &\leq \Big(\tilde b^{\A,\B}_\delta(t_0,x_0)-\tilde b^{\A,\B}_\delta(s_0,y_0) \Big)\cdot
D_x\phi(x_0,y_0) +\varepsilon
         |x_0+y_0||\tilde b^{\A,\B}_\delta(s_0,y_0)|
\\
&\notag\quad 
+C\Big(1+\int_{|z|>\delta}\rho(z)\nu(dz)\Big)\Big((1-\theta)\Delta
  t+(\gamma+\varepsilon)\Delta x\Big) \\
\label{eq:gen1}&\leq C\Big(1+\int_{|z|>\delta}\rho(z)\nu(dz)\Big)\Big((|x_0-y_0|+|t_0-s_0|)L+(1-\theta)\Delta t+(\gamma+\varepsilon)\Delta  
x  \Big)+ o_\varepsilon(1).
\end{align}
In the case that $\eta$ does not depend on $t$, then a recomputation
of the above estimate using the fact that $\tilde
b^{\A,\B}_\delta(x,t):=b^{\A,\B}(x,t)-\int_{|z|>\delta}
\eta^{\A,\B}(x;z) \, \nu(dz) $,  leads to  
\begin{align}\label{eq:indepd-t-1}
I_3&\leq C\Big(|t_0-s_0|+(1-\theta)\Delta t+\Big(1+\int_{|z|>\delta}\rho(z)\nu(dz)\Big)\Big(|x_0-y_0|+(\gamma+\varepsilon)\Delta  
x\Big)  \Big)+o_\varepsilon(1).
\end{align}

When $\eta$ does not depend on both $x$ and $t$ then
\begin{align} \label{eq:indepd-t-x-1}
I_3&\leq C\Big(|t_0-s_0|+|x_0-y_0|+(1-\theta)\Delta t+\Big(1+\int_{|z|>\delta}\rho(z)\nu(dz)\Big)(\gamma+\varepsilon)\Delta  
x  \Big)+o_\varepsilon(1).
\end{align}

\smallskip

\noindent\textbf{Step 3:}
It remains to estimate $I_4$. We rewrite this term as
\begin{align*}
I_4=\ & \vartheta \left[\mathcal
        J^{\A,\B,\delta}_h[U_h](t_0,x_0)-\mathcal{J}^{\A,\B,\delta}[u](s_0,y_0)
        \right]  +(1-\vartheta) \left[\tilde{\mathcal J}_h^{\A,\B,\delta}[U_h,W](t_0-\Delta t,x_0) -\mathcal{J}^{\A,\B,\delta}[u](s_0,y_0) \right] \\
\equiv\ &  \vartheta  J_1  + (1-\vartheta) J_2.
\end{align*} 
By the definition of $W$ and since $\sum \omega_{\bj}(x;h) = 1$, we find that
\begin{align*}
 J_2 = & \int_{|z|>\delta} \sum_{\bj \in \zn} \bigg\{  u(s_0,y_0) - u(s_0, y_0 + \Eta(s_0,y_0;z)) \notag \bigg. \\
& \hspace*{2cm} \left. - \Big(U^p_{\bk} - \xi(t_0,s_0)+ \xi(t_0-\Delta t,s_0) - \frac{\mu}{4T}\Delta t\Big)  +  U^{p-1}_{\bk+ \bj}\right\} \omega_{\bj}(\Eta(t_0-\Delta t,x_0;z);h) \ \nu(dz). 
\end{align*}
In the following argument, it is essential that we
  have $U^p_{\bk}$ in the integral defining $J_2$ and not
  $U^{p-1}_{\bk}$, and this explains why we introduced the strange
  quantity $\tilde{\mathcal J}_h^{\A,\B,\delta}[U_h,W]$ in the first
  place.
 Recall that $(t_0,x_0,s_0,y_0)$ is a global maximum point of $\Psi$, so $\Psi(t_0,x_0,s_0, y_0) \geq \Psi (t_0-\Delta t, x_0 +
x_{\bj}, s_0, y_0 + \Eta(s_0,y_0;z))$, and hence
\begin{align*}
& u(s_0,y_0) - u(s_0, y_0 + \Eta(s_0,y_0;z)) - \Big(U^p_{\bk} - \xi(t_0,s_0)+ \xi(t_0-\Delta t,s_0) - \frac{\mu}{4T}\Delta t\Big)  +  U^{p-1}_{\bk+ \bj} \\
&\leq \phi(x_0 + x_{\bj} , y_0 + \Eta(s_0,y_0;z)) - \phi(x_0, y_0).
\end{align*} 
By the nonnegativity of $\omega_{\bj}$, the definition of the
interpolation $\gi_h$, the error bound \eqref{int_err}, and assumptions \ref{A3} and \ref{A4}, we may
use these inequalities to estimate $J_2$:
\begin{align}
   J_2 \leq & \int_{|z|>\delta} \sum_{\bj \in
    \zn} \left\{ \phi(x_0 + x_{\bj} , y_0 + \Eta(s_0,y_0;z)) -
    \phi(x_0, y_0)  \right\}\omega_{\bj}(\Eta(t_0-\Delta t,x_0;z);h) \ \nu(dz)  \notag \\
  = & \int_{|z|>\delta} \left\{ \gi_h[\phi(x_0 + \cdot
    , y_0 + \Eta(s_0,y_0;z))](\Eta(t_0-\Delta t,x_0;z)) -
    \phi(x_0,y_0)\right\} \ \nu(dz) \notag \\
  \leq & \int_{|z|>\delta} \left\{ \phi(x_0 +
    \Eta(t_0-\Delta t,x_0;z) , y_0 + \Eta(s_0,y_0;z)) - \phi(x_0,y_0)+
    K (\gamma+\varepsilon) \ (\Delta x)^2\right\} \ \nu(dz) \notag \\
  =  & \int_{|z|>\delta}  \bigg\{ \gamma (x_0-y_0)\cdot(\Eta(s_0,y_0;z)-\Eta(t_0-\Delta t,x_0;z))+  \frac{\gamma}{2}|\Eta(s_0,y_0;z)-\Eta(t_0-\Delta t,x_0;z)|^2 \notag \\
  &\hspace*{1cm}+ \varepsilon \Big(x_0\cdot \Eta(t_0-\Delta t,x_0;z) +
  y_0\cdot\Eta(s_0,y_0;z)\Big)   +
  \frac{\varepsilon}{2}(|\Eta(t_0-\Delta t,x_0;z)|^2 +
  |\Eta(s_0,y_0;z)|^2)  \notag \\
  & \hspace*{1cm} + K (\gamma+\varepsilon) \ (\Delta x)^2 \bigg\} \ \nu(dz)  \notag \\
  \leq &\ C \gamma \left\{|x_0-y_0|\Big(|x_0-y_0|+ |t_0-s_0|+ \Delta t\Big)
    \int_{|z|>\delta} \rho(z) \ \nu(dz)+\Big(|x_0-y_0|^2+ |t_0-s_0|^2 + \Delta t^2\Big)
   \int_{|z|>\delta} \rho(z)^2 \ \nu(dz)\right\} \notag\\
&  \qquad +C \varepsilon(|x_0|+ |y_0|) \int_{|z|>\delta} \rho(z) \ \nu(dz)+ C \varepsilon \int_{|z|>\delta} \rho(z)^2 \ \nu(dz)  + C (\gamma+\varepsilon)
  (\Delta x)^2\int_{|z|>\delta} \ \nu(dz).\label{eq:gen2}
\end{align}

In the case that $\eta$ does not depend on $t$, an easy recomputation of the
above estimate shows that
\begin{align}
\label{eq:indepd-t-2}J_2& \leq  C\Big( \gamma|x_0-y_0|^2+\varepsilon(1+|x_0|+ |y_0|)\Big)
    \int_{|z|>\delta} (\rho(z) + \rho(z)^2) \ \nu(dz) + C (\gamma+\varepsilon)
  (\Delta x)^2\int_{|z|>\delta} \ \nu(dz),
\end{align}

and when $\eta$ does not depend on both $x$ and $t$ then
\begin{align}
\label{eq:indepd-t-x-2}J_2& \leq  C\varepsilon(1+|x_0|+ |y_0|)
    \int_{|z|>\delta} (\rho(z) + \rho(z)^2) \ \nu(dz) + C (\gamma+\varepsilon)
  (\Delta x)^2\int_{|z|>\delta} \ \nu(dz). 
\end{align}

Similar but simpler arguments, using the  fact that
$\Psi(t_0,x_0,s_0,y_0) \geq \Psi(t_0,x_0 + x_{\bj}, s_0, y_0 +
\Eta(s_0,y_0;z))$, shows that $J_1$, and hence also $I_4$, satisfy the
same upper bounds as $J_2$.

\medskip

\noindent\textbf{Step 4:} By 
  \ref{A3}-\ref{A5} and the 
  definition of $\Gamma(\sigma,\delta)$,
  \begin{align*}
& \int_{|z|>\delta} \rho(z)^2 \ \nu(dz) \leq K^2 \int_{0<|z|<1} |z|^2 \
  \nu(dz) + \int_{|z|>1} \rho(z)^2 \ \nu(dz) \leq C, \\ 
 &  \int_{|z|>\delta} \rho(z) \ \nu(dz) \leq K^2 \int_{\delta<|z|<1} |z| \ \nu(dz)
   + \int_{|z|>1} \rho(z)^2 \ \nu(dz)\leq C(1+\Gamma(\sigma,\delta)), \\ 
& \int_{|z|>\delta} \ \nu(dz)\leq
C\int_{\delta<|z|<1}\frac{dz}{|z|^{M+\sigma}}+C\leq
  C(1+\delta^{-\sigma}). 
\end{align*}
Now we get a bound on $\mu$ from \eqref{diff} by using these
estimates along with the estimates of steps 1--3 (which are
independent of $\alpha$ and $\beta$). If we also take into
account the fact that $0<\Delta x<\delta\leq 1$,
$\Gamma(\sigma,\delta)\geq1$, and that we may take $\eta,\gamma\geq1$
and $\Delta t\leq 1$, we find after combining \eqref{eq:bound-on-I2}, \eqref{eq:gen1} $\&$\eqref{eq:gen2} and  dropping all non-dominant terms that
\begin{align} \label{sigma_bound}
\notag \frac\mu{2T} \leq & \ 
                 I_1+\sup_\alpha\sup_\beta\Big\{I_2+I_3+I_4\Big\}\\ 
\notag \leq & \ C\bigg\{\eta\Delta t+\gamma\Delta t^2+\gamma |t_0-s_0|^2+\gamma\frac{\Delta x^2}{\delta^{\sigma}}\bigg\}\\
\notag & + C\Gamma(\sigma,\delta) \bigg\{|x_0-y_0|+|t_0-s_0|+\Delta
  t+\gamma\Delta x+\gamma|x_0-y_0|\Big(|x_0-y_0|+
  |t_0-s_0|+ \Delta t\Big) \bigg\}\\ 
 & + C\varepsilon \bigg\{1+\Gamma(\sigma,\delta)\Big(|x_0|+ 
  |y_0|+\Delta x\Big) +  \frac{\Delta
  x^2}{\delta^{\sigma}}\bigg\}.
\end{align}
Note that by \eqref{eq:doublinboundsg-}, all $\varepsilon$-terms go to $0$ as
$\varepsilon\to0$ and $\gamma,\eta,\delta$ are fixed, and $\gamma\frac{\Delta 
  x^{2}}{\delta^\sigma}\leq\gamma\Delta x\,\delta^{1-\sigma}\leq
\Gamma(\sigma,\delta)(\gamma\Delta x)$ since $\Delta x\leq \delta$. Hence in
view of estimates \eqref{eq:doublinboundsg-}--\eqref{esti_2},
\begin{align*} 
  \frac\mu{2T} \leq C\bigg(\eta\Delta t+\gamma\Delta t^2+\frac\gamma{\eta^{2q}} +\Gamma(\sigma,\delta)\Big( \frac1{\gamma}+\frac1{\eta^q} +\Delta
 t+\gamma\Delta x\Big)   \bigg)+o_\varepsilon(1).
\end{align*}  
In the case that $\eta$ does not depend on $t$, we combine  \eqref{eq:bound-on-I2}, \eqref{eq:indepd-t-1} and \eqref{eq:indepd-t-2} and find 
\begin{align*}
\frac\mu{2T} \leq & \ C\bigg\{\eta\Delta
t+|t_0-s_0|+\gamma\frac{\Delta 
                       x^2}{\delta^{\sigma}}+\Gamma(\sigma,\delta)\Big(|x_0-y_0|+\gamma\Delta x+\gamma|x_0-y_0|^2\Big)\bigg\}+o_\varepsilon(1)\\
                       \leq & \ C\bigg\{\eta\Delta
                       t+\frac1{\eta^q}
                       +\Gamma(\sigma,\delta)\Big(\frac1\gamma+\gamma\Delta
                       x\Big)\bigg\}+o_\varepsilon(1),
\end{align*}
and when $\eta$ does not depend on both $x$ and $t$ then   \eqref{eq:bound-on-I2}, \eqref{eq:indepd-t-x-1} and \eqref{eq:indepd-t-x-2} are combined to have
\begin{align*}
\frac\mu{2T} \leq & \ C\bigg\{\eta\Delta
t+|t_0-s_0|+|x_0-y_0|+\gamma\frac{\Delta 
                       x^2}{\delta^{\sigma}}+\Gamma(\sigma,\delta)\gamma\Delta x\bigg\}+o_\varepsilon(1)\\ \leq &\ C\bigg\{\eta\Delta
                       t+\frac1{\eta^q} + \frac{1}{\gamma}
                       +\Gamma(\sigma,\delta)\gamma\Delta x\bigg\}+o_\varepsilon(1).
\end{align*}

\smallskip

\noindent{\bf Conclusion:} Sending $\varepsilon\to0$ and combining the
above estimates for $\mu$ 
in the cases whether $t_0$ and/or $s_0$ are positive or zero, we find that
\begin{align}\label{final}
\mu &\leq C\bigg(\frac1\gamma+\frac1{\eta^{\tilde q}}\bigg)+ CT\bigg(\eta\Delta
  t+\gamma\Delta t^2+\frac\gamma{\eta^{2q}} +\Gamma(\sigma,\delta)\Big(
  \frac1{\gamma}+\frac1{\eta^q}+\Delta t+\gamma\Delta x \Big)\bigg),\\
\intertext{when $\eta$ does not depend on $t$ then}
\mu &\leq C\bigg(\frac1\gamma+\frac1{\eta^{\tilde q}}\bigg)+ CT\bigg(\eta\Delta
  t+\frac1{\eta^q}+ \Gamma(\sigma,\delta)\Big(
  \frac1{\gamma}+\gamma\Delta x\Big) \bigg), \label{final2}
\intertext{and finally when $\eta$ does not depend on both $x$ and $t$ then}
\mu &\leq C\bigg(\frac1\gamma+\frac1{\eta^{\tilde q}}\bigg)+ CT\bigg(\eta\Delta
  t+\frac1{\eta^q}  + \frac{1}{\gamma} + \Gamma(\sigma,\delta)\gamma\Delta x \bigg). \label{final3}
\end{align}
Here $q=1=\tilde q$ if $K(u_0)<\infty$, otherwise $q=\frac
\sigma{2\sigma-1}$ and $\tilde q=\frac1{2\sigma-1}$ (when $\sigma\neq 1$).

\subsection{Proof of Theorem \ref{main:thm} when \texorpdfstring{$\sigma\in[0,1)$}.} 
In this case $\sigma\in(0,1)$, $K(u_0)<\infty$, $\Gamma(\sigma,\delta)=1$, and $q=1$ in
\eqref{final} since $u$ is Lipschitz in $t$ by Theorem \ref{wellpos_1} (d). From estimate \eqref{final} and our assumptions (note that $\Delta x\leq
\delta\leq1$),  we see that the optimal parameter values are
$\eta=\gamma$. This leads to the following bound
\begin{align*}
\mu\leq C\frac1\gamma+CT\bigg( \frac1\gamma + \gamma \Big(\Delta t+\Delta x\Big)\bigg).
\end{align*}
We conclude the proof of Theorem \ref{main:thm} (a) by taking $\gamma =
(T\wedge1)^{-\frac12}(\Delta t + \Delta
x)^{-\frac12}$ and then
adding the estimate from Lemma \ref{lem_delta}.  
\bigskip


\subsection{Proof of Theorem \ref{main:thm}  when \texorpdfstring{$\sigma\in(1,2)$}.}
In this case $\sigma\in(1,2)$, $\Gamma(\sigma,\delta)> 1$, and
  $q=\frac\sigma{2\sigma-1}$ and $\tilde q=\frac1{2\sigma-1}$ in
\eqref{final} since $u$ and $U_h$ are only H\"older
$\frac1\sigma$ in $t$ at $t=0$ by Theorem \ref{wellpos_1}
(c) and Theorem \ref{thm:existence-scheme} (d). The optimal
values for $\eta$ and $\gamma$ in \eqref{final} can be chosen
by balancing the principal terms. This leads to
$$\gamma=\min\Big\{((T\wedge1)\Delta x)^{-\frac12},((T\wedge1)\Delta t^2)^{-\frac12},\eta^{q}\Big\}\qquad\text{and}\qquad
\eta={((T\wedge1)\Delta t)^{-\frac1{1+\tilde q}}}.
$$
  Then $\frac1{1+\tilde q}=\frac{2\sigma-1}{2\sigma}$, $(T\wedge1)\eta\Delta t=\frac1{\eta^{\tilde q}}=((T\wedge1)\Delta t)^{\frac1{2\sigma}}$,
$\frac{\gamma}{\eta^{2q}}\leq\frac1{\eta^q}=((T\wedge1)\Delta t)^{\frac 12}$,
and by our assumptions (including $\delta,\Delta x,\Delta t\leq 1$),
\eqref{final} implies that  
\begin{align*}
\mu &\leq C\bigg(\frac1\gamma+\frac1{\eta^{\tilde q}}\bigg)+ CT\bigg(\eta\Delta
t 
+\Gamma(\sigma,\delta)\Big(
\frac1{\gamma}+\frac1{\eta^{q}}+\gamma\Delta x\Big) \bigg)\\
&\leq
  C(1+T)\bigg(\big((T\wedge1)\Delta
  t\big)^{\frac1{2\sigma}}+\Gamma(\sigma,\delta)\Big(\big((T\wedge1)\Delta
  x\big)^{\frac12}+\big((T\wedge1)\Delta t\big)^{\frac 12}\Big)\bigg). 
\end{align*}
We conclude the proof of Theorem \ref{main:thm} (b) by adding the
estimate from Lemma \ref{lem_delta}.  

\subsection{Proof of Theorem \ref{main:thm}  when \texorpdfstring{$\sigma=1$}.}

The proof is a combination of the proof of the case $\sigma\in(1,2)$
and the regularization argument of the proof of Theorem
\ref{thm:existence-scheme}\textit{(e)}. Let $u_0^{\tilde{\epsilon}}$
be the mollified initial data and $u^{\tilde{\epsilon}}$ and
$U^{\tilde{\epsilon}}_h$ be the corresponding solutions of
\eqref{eqn:main-perturbed-1} and \eqref{eq:the scheme} both with initial
condition $u_0^{\tilde{\epsilon}}$. Then we double
the variables by redefining $\Psi$ to be
\begin{align*} 
\Psi(t,x,s,y) = U^{\tilde{\epsilon}}_h(t,x)- u^{\tilde{\epsilon}}(s,y) -
  \phi(x,y) - \xi(t,s)  -\frac{\mu}{4T} (t+s) 
\end{align*}
where $\mu=\sup_{\mathcal{G}_h^N}\big(U_h^{\tilde{\epsilon}} - u^{\tilde{\epsilon}}\big)$ and
$\phi$ and $\xi$ are the same as before. As before, there exists a 
maximum point $(x_0,y_0,t_0,s_0)$ of $\Psi$ satisfying
\eqref{max_pt}--\eqref{esti_00}. By  Theorem \ref{wellpos_1}, $|u^{\tilde{\epsilon}}(t,y) -
u^{\tilde{\epsilon}}(s,y)| \leq K(u_0^{\tilde{\epsilon}})|t-s|$ for $K(u_0^{\tilde{\epsilon}})=C (1+ |\log
\tilde{\epsilon}|)$, and hence by \eqref{esti_00}
\begin{align}\label{esti_2_sigma1}
|t_0-s_0| \leq \frac{K(u_0^{\tilde{\epsilon}})}{\eta}. 
\end{align}    
At this point the proof continues as for the case $\sigma\in(1,2)$ but
with \eqref{esti_2_sigma1} replacing \eqref{esti_2}. If either $t_0=0$
or $s_0=0$ we use as before regularity to estimate $\mu$. E.g. 
if $s_0=0$, then since $\Psi (t,x,t,x)\leq \Psi (t_0,x_0,0,y_0)$,
\begin{align*}
&U_h^{\tilde{\epsilon}}(t,x)  - u^{\tilde{\epsilon}}(t,x) - \varepsilon|x|^2 -\frac{\mu}{2T}t\leq  U_h^{\tilde{\epsilon}}(t_0, x_0)- u_0^{\tilde{\epsilon}}(y_0)   
\leq C \Big( K(u_0^{\tilde{\epsilon}})t_0+ |x_0 - y_0|\Big)  
\leq C \Bigg( \frac{K(u_0^{\tilde{\epsilon}})^2}{\eta}+ \frac{1}{\gamma}\Bigg),  
\end{align*}
where we used \eqref{esti_1} and \eqref{esti_2_sigma1} for the last
inequality. We send $\varepsilon\to0$ and take the
supremum over $\mathcal{G}_h^N$ to find that  
\begin{align}\label{esti_t02_sigma1}
\mu \leq C \Bigg( \frac{K(u_0^{\tilde{\epsilon}})^2}{\eta}+ \frac{1}{\gamma}\Bigg).
\end{align}
The same bound holds also when $t_0=0$. When both $t_0>0$ and
$s_0>0$, the proof for $\sigma\in(1,2)$ is valid also for $\sigma=1$
up until and including the bound \eqref{sigma_bound}.
We add the estimates on $\mu$, \eqref{sigma_bound} and
\eqref{esti_t02_sigma1}, use estimates \eqref{esti_1} and \eqref{esti_2_sigma1}, and send
$\varepsilon \rightarrow 0$ (compare with \eqref{final}), to get
\begin{align}\label{final_sigma1}
\mu &\leq C\bigg(\frac1\gamma+\frac{K(u_0^{\tilde{\epsilon}})^2}{\eta}\bigg)
+ CT\bigg(\eta \Delta t+\gamma \Delta t^2+\frac{\gamma K(u_0^{\tilde{\epsilon}})^2}{\eta^2} +|\log\delta| \Big(\frac{1}{\gamma}+\frac{K(u_0^{\tilde{\epsilon}})}{\eta}+\Delta t+\gamma\Delta x \Big)\bigg). 
\end{align}
Taking optimal values of $\gamma$ and $\eta$ in \eqref{final_sigma1}
by balancing the principal terms, then leads to
$$ \gamma=\min\Big\{((T\wedge1)\Delta x)^{-\frac12},((T\wedge1)\Delta t^2)^{-\frac12},\frac{\eta}{K(u_0^{\tilde{\epsilon}})}\Big\}\qquad\text{and}\qquad
\eta=\frac{K(u_0^{\tilde{\epsilon}})}{((T\wedge1)\Delta t)^{1/2}},$$
and hence
\begin{align*}
\big(U_h^{\tilde{\epsilon}} - u^{\tilde{\epsilon}}\big)\leq  \ \mu & \  \leq C(1+T)(T\wedge1)^{\frac12}\bigg( \Delta x^{\frac12} + |\log \tilde\epsilon|\Delta t^{\frac12}+ |\log\delta|\big( \Delta x^{\frac12} + \Delta t^{\frac12}\big)   \bigg).
\end{align*}
A bound for $( u^{\tilde{\epsilon}}-U_h^{\tilde{\epsilon}} )$ can be found by interchanging  the roles of $u^{\tilde{\epsilon}}$ and $U_h^{\tilde{\epsilon}}$. By comparison, Theorems \ref{wellpos_1} \textit{(a)} and
\ref{thm:existence-scheme}, \textit{(a)}, $|U_h-
U_h^{\tilde{\epsilon}}|,|u^{\tilde{\epsilon}} -
u^{\delta}|\leq |u^{\tilde{\epsilon}}_0 -
u_0|\leq C\tilde\epsilon$, and then
\begin{align*}
|U_h(t,x) - u^{\delta}(t,x)|  & \leq |U_h(t,x)-
                                         U_h^{\tilde{\epsilon}}(t,x)|+
                                         |U_h^{\tilde{\epsilon}}(t,x)
                                         - u^{\tilde{\epsilon}}(t,x)|
                                         + |u^{\tilde{\epsilon}}(t,x)
                                         - u^{\delta}(t,x)|  \\
                              & \leq 2\tilde\epsilon +C(1+T)(T\wedge1)^{\frac12}\bigg( \Delta x^{\frac12} + |\log \tilde\epsilon|\Delta t^{\frac12}+ |\log\delta|\big( \Delta x^{\frac12} + \Delta t^{\frac12}\big)   \bigg).
\end{align*}
The proof of Theorem \ref{main:thm} \textit{(b)} for
$\sigma=1$ is complete by taking $\tilde{\epsilon} = \Delta t$ and adding the estimate of Lemma \ref{lem_delta}.

\subsection{Proof of Theorem \ref{main:thm2} (a).}
We only do the case $\sigma\in(1,2)$. The case
  $\sigma=1$ follows in a similar way, cf.  proof of Theorem
  \ref{main:thm} for $\sigma=1$, and the case $\sigma\in[0,1)$ follows
  directly from Theorem \ref{main:thm}. Now  $\Gamma(\sigma,\delta)> 1$, and
  $q=\frac\sigma{2\sigma-1}$ and $\tilde q=\frac1{2\sigma-1}$ in
\eqref{final} since $u$ and $U_h$ are only H\"older
$\frac1\sigma$ in $t$ at $t=0$ by Theorem \ref{wellpos_1}
(c) and Theorem \ref{thm:existence-scheme} (d).
 Note that when $\Delta t\leq \Delta x$, $\gamma\leq\eta^{q}$, and
$\gamma\geq1$ -- then
$\frac{\gamma}{\eta^{2q}}\leq \frac1{\eta^q}$, $\frac1{\eta^q}\leq
\frac1\gamma$ and $\Delta t\leq \gamma\Delta x$. By our assumptions, both 
\eqref{final} with $\Delta t\leq \Delta x$ (and then
$(1\leq)\,\gamma\leq\eta^{q}$,
see below!) and \eqref{final2} implies that 
\begin{align*}
\mu &\leq C\bigg(\frac1\gamma+\frac1{\eta^{\tilde q}}\bigg)+ CT\bigg(\eta\Delta
t+\frac1{\eta^{q}} 
+\Gamma(\sigma,\delta)\Big(
  \frac1{\gamma}+\gamma\Delta x\Big) \bigg). 
\end{align*}
We conclude the proof of  Theorem \ref{main:thm2} (a) by taking taking
$\eta=((T\wedge1)\Delta t)^{-\frac1{1+\tilde q}}$, $\gamma=((T\wedge1)\Delta
x)^{-\frac12}$, and then adding the estimate from Lemma \ref{lem_delta}.

\subsection{Proof of Theorem \ref{main:thm2} (b).}
In this case $\sigma\in(1,2)$, $\Gamma(\sigma,\delta)> 1$, and $q=1$ in
\eqref{final}  and \eqref{final2} since $u$ and $U_h$ are Lipschitz
in $t$ at $t=0$ by Theorem \ref{wellpos_1} 
(d) and Theorem \ref{thm:existence-scheme} (e). By our assumptions (note that
$\Delta x\leq \delta\leq1$), both \eqref{final} with $\Delta t\leq
\Delta x$ (and then $\gamma\leq\eta$, see below!) and \eqref{final2}
implies that
\begin{align*}
\mu &\leq C\bigg(\frac1\gamma+\frac1{\eta}\bigg)+ CT\bigg(\eta\Delta
t+\frac1{\eta} 
+\Gamma(\sigma,\delta)\Big(
  \frac1{\gamma}+\gamma\Delta x\Big) \bigg). 
\end{align*}
We conclude the proof of Theorem \ref{main:thm2} (b)  by taking $\eta=((T\wedge1)\Delta t)^{-\frac12}$,
$\gamma=((T\wedge1)\Delta x)^{-\frac12}$, and then adding the
estimate from Lemma \ref{lem_delta}.

\subsection{Proof of Theorem \ref{main:thm3} (a).}
Again we only do the case $\sigma\in(1,2)$. The case
  $\sigma=1$ follows in a similar way, cf.  proof of Theorem
  \ref{main:thm} for $\sigma=1$, and the case $\sigma\in[0,1)$ follows
  directly from Theorem \ref{main:thm}. Again  $\Gamma(\sigma,\delta)> 1$, and
  $q=\frac\sigma{2\sigma-1}$ and $\tilde q=\frac1{2\sigma-1}$ in
\eqref{final3}. 
We conclude the proof by taking taking
$\eta=((T\wedge1)\Delta t)^{-\frac1{1+\tilde q}}$,
$\gamma=\Big((T\wedge1)\Gamma(\sigma,\delta)\Delta x\Big)^{-\frac12}$, and
then adding the estimate from Lemma \ref{lem_delta}.

\section{On suboptimal rates for general monotone
  schemes} \label{sec:higher-order}
A close inspection of our proofs shows
that our methods can handle a large class of monotone approximations of
\eqref{eqn:main}-\eqref{int-cond} that allow for truncation errors
involving derivatives of at most order two. In most numerical
approximations it is possible to use {\em suboptimal} 
truncation errors that satisfy this condition. 
The resulting error estimates will not be optimal in general, but at
this point there are no alternative methods to get error
estimates for general  Isaacs equations. 

We illustrate this approach by proving 
suboptimal error estimates for an improved version of our previous
scheme. The idea is to compensate for the truncation of the nonlocal operator
$\mathcal{I}^{\A,\B}$ by a vanishing local diffusion. To do so, first note
that $\mathcal{I}^{\A,\B}[\phi] 
= \mathcal{I}^{\A,\B,\delta}[\phi]+
\mathcal{I}^{\A,\B}_{\delta}[\phi]$ where
$\mathcal{I}^{\A,\B,\delta}[\phi]$ is defined in \eqref{viscosity:term2} and  
\begin{align*}
 \mathcal{I}^{\A,\B}_{\delta}[\phi](t,x)= \int_{|z|\le \delta} \big( \phi(t,x+\eta^{\A,\B}(t,x; z)) - \phi(t, x) -\eta^{\A,\B}(t,x;z)\cdot \nabla_x\phi(t,x)\big) \, \nu(dz).
\end{align*}
By Taylor expansion we see that we can approximate this term by the
local term (cf. e.g. \cite{JKL08})
\begin{align*}
  \textrm{tr}\Big[a_\delta^{\A,\B}(t,x)D^2
\phi(t,x)\Big]\qquad\text{with}\qquad a_\delta^{\A,\B}(t,x)=\frac 12
  \int_{|z|\le\delta} \eta^{\A,\B}(t,x;z)
  \eta^{\A,\B}(t,x;z)^T\nu(dz)
  \end{align*}
and the error is $C\|D^3\phi\|_\infty\int_{|z|\le\delta}
|\eta^{\A,\B}(t,x;z)|^3\nu(dz)\leq C \|D^3\phi\|_\infty
\delta^{3-\sigma}$ in view of \ref{A3} and \ref{A5}. Next we take a
monotone finite difference approximation $\mathcal{L}^{\A,\B}_{\delta,
  h}[\phi]$ of this local term with error $K \|a_\delta^{\A,\B}\|_0\| D^4 \phi\|_0
(\Delta x)^2\leq K \delta^{2-\sigma} (\Delta x)^2 \| D^4 \phi\|_0$.
Note that to ensure this rate, we have to assume e.g. that
$a_\delta^{\alpha,\beta}$ is diagonally dominant. Under this
assumption, the (wide stencil) schemes of Kushner \cite{Ku:Book},
Bonnans-Zidani \cite{BZ} or Krylov \cite{Krylov:2005lj} would give this
error. Combining these results, we conclude that  $\mathcal{L}^{\A,\B}_{\delta,
  h}$ is an approximation of $\I_\delta^{\alpha,\beta}$ with error
\begin{align*} 
\big|\mathcal{I}^{\A,\B}_{\delta}[\phi]-\mathcal{L}^{\A,\B}_{\delta,
  h}[ \phi] \big|\leq C\Big( ||D^3 \phi||_0\delta^{3-\sigma}+||D^4 \phi||_0\Delta x^2 \delta^{2-\sigma} \Big) .
\end{align*}

Now we discretize equation \eqref{eqn:main} as in Section
\ref{sec:main} except that we do not throw away the
$\I^{\alpha,\beta}_\delta$-term but rather approximate it by
$\mathcal{L}^{\A,\B}_{\delta, h}$. The resulting 
semidiscrete approximation is then (compare with \eqref{eqn:main-semidiscrete})
 \begin{align}  \label{eqn:main-semidiscrete2}
 & u_t + \inf_{\A\in \mathcal{A}}\sup_{\B\in\mathcal{B}} \left\{ -f^{\A,\B}(t,x) +c^{\A,\B}(t,x) u(t,x) -\mathcal{D}_h^{\A, \B,\delta}[u](t,x)- \mathcal{L}_{\delta,h}^{\A,\B}[u](t,x)-\mathcal{J}_h^{\A,\B,\delta}[u](t,x) \right\}  = 0. 
\end{align} 
In view of the discussion above and in Section \ref{sec:main}, the
truncation error for this scheme is
\begin{align*}
  \nonumber
  &E:=\big\|b^{\alpha,\beta}\cdot
  \nabla \phi+\mathcal{I}^{\A,\B}[\phi]-(\mathcal{D}_h^{\A, \B,\delta}+\mathcal{L}^{\A,\B}_{\delta,
    h}+\mathcal{J}_h^{\A,\B,\delta})[ \phi] \big\|_0\\ 
  &\leq C\bigg( \Delta x\, \Gamma(\sigma,\delta) ||D^2 \phi||_0+||D^3 \phi||_0\delta^{3-\sigma}+||D^4 \phi||_0\Delta x^2 \delta^{2-\sigma}+||D^2\phi||_0 \frac{\Delta x^2}{\delta^{\sigma}} \bigg) .
\end{align*}
For $\sigma\in[0,1)$ or $\sigma=1$, the optimal choice of $\delta$ is
  $\delta=\Delta x$ and then $E=O(\Delta x)$ or
  $E=O(\Delta x|\ln\Delta x|)$ as in the previous section. But when
  $\sigma\in(1,2)$, then the two first terms in the bound on $E$
  dominate and the optimal choice is $\delta=\Delta x^{\frac12}$. The
  corresponding error $E=O(\Delta x^{\frac{3-\sigma}{2}})$ is
  better than the  (optimal) truncation error
  $O(\Delta x^{2-\sigma})$ from Section \ref{sec:main} (see Remark
  \ref{optd-rem}), especially when $\sigma\approx 2$. 
To find a useful suboptimal bound, note that 
$|\mathcal{L}^{\A,\B}_{\delta, 
    h}[\phi]|\leq C\|a_\delta^{\alpha,\beta}\|_0\|D^2\phi\|_0\leq
C\delta^{2-\delta}\|D^2\phi\|_0$ and
$|\mathcal{L}^{\A,\B}_{\delta}[\phi]|\leq
C\delta^{2-\sigma}\|D^2\phi\|_0$, and then
\begin{align*}
  \tilde E&:= \big|b_\delta^{\alpha,\beta}\cdot
  \nabla \phi-\mathcal{D}_h^{\A, \B,\delta}[\phi]\big|+|\mathcal{J}^{\A,\B,\delta}[\phi]-\mathcal{J}_h^{\A,\B,\delta})[\phi]\big|+\big|\mathcal{L}^{\A,\B}_{\delta}[\phi]\big|+\big|\mathcal{L}^{\A,\B}_{\delta,
    h}[\phi]\big|\\
  &\ \leq C||D^2 \phi||_0\bigg( \Delta x\, \Gamma(\sigma,\delta) + \frac{\Delta x^2}{\delta^{\sigma}} +\delta^{2-\sigma}\bigg). 
\end{align*}
This is same estimate that was optimal for the scheme in Section \ref{sec:main}.

A fully discrete scheme is then obtained by discretizing \eqref{eqn:main-semidiscrete2} in time as in
\eqref{eq:the scheme}. For simplicity, we only consider an implicit
scheme here:
\begin{align}\label{eq:the scheme_diffusion}
  U_{\bj} ^n = &\ U_\bj^{n-1}-\Delta t  \inf_{\A\in
    \mathcal{A}}\sup_{\B\in\mathcal{B}} \left\{- f^{\A,\B, n}_\bj +
  c_\bj^{\A, \B,n} U_\bj^{n}-  \mathcal{D}_h^{\A,
    \B,\delta}[U]_\bj^{n}
  -\mathcal{J}_h^{\A,\B,\delta}[U]_\bj^{n}-\mathcal{L}^{\A,\B}_{\delta,
    h}[U]^{n}_{\bj} \right\}.
 \end{align}
We have the following result.

\begin{thm} Assume $\mathcal{L}_{\delta,h}^{\alpha,\beta}$ is as
  explained above and $\eta^{\A,\B}$ does not depend on $x, t$. Then Theorem \ref{main:thm3}
  remains true when the scheme \eqref{eq:the scheme} is replaced by the scheme
  \eqref{eq:the scheme_diffusion}. 
\end{thm}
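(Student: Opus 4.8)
The plan is to rerun the proof of Theorem \ref{main:thm3} from Section \ref{proof}, with two modifications. First, since the scheme \eqref{eq:the scheme_diffusion} approximates the \emph{full} operator $\mathcal{I}^{\A,\B}=\mathcal{I}^{\A,\B,\delta}+\mathcal{I}^{\A,\B}_\delta$, we do not perform the reduction to finite L\'evy measures of Section \ref{sec:red} (there is nothing left to discard), so the $\delta^{1-\frac\sigma2}$ term that Lemma \ref{lem_delta} previously supplied must reappear elsewhere; it will come out of the $\mathcal{I}^{\A,\B}_\delta$–part of the equation. As a preliminary I would record the analogue of Theorem \ref{thm:existence-scheme} for \eqref{eq:the scheme_diffusion}: since $\mathcal{L}^{\A,\B}_{\delta,h}$ has nonnegative off-diagonal weights, adding the term $-\mathcal{L}^{\A,\B}_{\delta,h}[U]^n$ to the implicit scheme preserves monotonicity (with no new CFL restriction), so wellposedness, $L^\infty$-stability and comparison go through unchanged; and since $|\mathcal{L}^{\A,\B}_{\delta,h}[\phi]|\le C\|a_\delta^{\A,\B}\|_0\|D^2\phi\|_0\le C\delta^{2-\sigma}\|D^2\phi\|_0$, the barriers $V^\pm=u_0^\epsilon\pm Ct$ and the estimates on $K(u_0^\epsilon)$ used in Theorem \ref{thm:existence-scheme}(d)--(e) survive verbatim (the new contribution is dominated by $K(u_0^\epsilon)$), so $U_h$ satisfies the $t=0$ regularity $|U_h(t_n,x)-u_0(x)|\le K\bar\omega(t_n)$ of \eqref{bomega}, and $\le C(1+K(u_0))t_n$ when $K(u_0)<\infty$.

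Next I would carry out the doubling of variables exactly as in Section \ref{sec:dbl}, with the same auxiliary function $\Psi$, the same $\phi,\xi$, and the same maximum point $(t_0,x_0,s_0,y_0)$; the subcases where $t_0$ or $s_0$ vanishes are dispatched, as there, using the $t=0$ regularity of $u$ (Theorem \ref{wellpos_1}) and of $U_h$ just established, and give the same bounds on $\mu$. In the main subcase $t_0,s_0>0$: on the PDE side use the viscosity supersolution inequality for $u$ with splitting level $\kappa=\delta$, which produces $\mathcal{I}^{\A,\B}_\delta[\psi](s_0,y_0)+\mathcal{I}^{\A,\B,\delta}[u,\nabla_y\psi](s_0,y_0)$ with $\psi(s,y)=-\phi(x_0,y)-\xi(t_0,s)-\tfrac\mu{4T}(t_0+s)$; on the scheme side, the implicit scheme at $(t_0,x_0)$ together with monotonicity (replacing $U$ by the quadratic barrier $W$ in the local terms $\mathcal{D}_h$ and $\mathcal{L}_{\delta,h}$, and keeping $\mathcal{J}_h[U]$ at the new time level exactly as in Section \ref{sec:dbl}) yields, after subtracting and using $\inf\sup-\inf\sup\le\sup\sup$, the same terms $I_1,I_2,I_3$, the term $I_4$ (now featuring $\mathcal{I}^{\A,\B,\delta}[u,\nabla_y\psi]$ in place of $\mathcal{J}^{\A,\B,\delta}[u]$, the discrepancy being a harmless $-b_\delta^{\A,\B}\cdot\nabla_y\psi$ that merges into $I_3$, and estimated exactly as in \eqref{eq:indepd-t-x-1}--\eqref{eq:indepd-t-x-2} since $\eta^{\A,\B}$ does not depend on $x,t$), plus one new term
\[
I_5:=\mathcal{L}^{\A,\B}_{\delta,h}[\phi(\cdot,y_0)](t_0,x_0)-\mathcal{I}^{\A,\B}_\delta[\psi](s_0,y_0).
\]

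Estimating $I_5$ is the only genuinely new point, and it is where the hypothesis that $\eta^{\A,\B}$ is independent of $x$ and $t$ is essential. Under that hypothesis $a_\delta^{\A,\B}$ is a \emph{constant} symmetric matrix, and $\phi(\cdot,y_0)$ and $\psi$ are quadratic polynomials with Hessians $(\gamma+\varepsilon)I$ and $-(\gamma+\varepsilon)I$. Two exactness facts then apply: the increment of a quadratic along a jump $\eta$ is precisely $\tfrac12\eta^T(D^2)\eta$, so $\mathcal{I}^{\A,\B}_\delta[\psi](s_0,y_0)=\mathrm{tr}\big[a_\delta^{\A,\B}D^2_y\psi\big]=-(\gamma+\varepsilon)\,\mathrm{tr}\,a_\delta^{\A,\B}$; and the monotone finite-difference operator $\mathcal{L}^{\A,\B}_{\delta,h}$ has truncation error of order $\|a_\delta^{\A,\B}\|_0\|D^4\phi\|_0\Delta x^2$ (as in the construction above, using diagonal dominance of $a_\delta^{\A,\B}$), hence is exact on quadratics, so $\mathcal{L}^{\A,\B}_{\delta,h}[\phi(\cdot,y_0)](t_0,x_0)=\mathrm{tr}\big[a_\delta^{\A,\B}D^2_x\phi\big]=(\gamma+\varepsilon)\,\mathrm{tr}\,a_\delta^{\A,\B}$. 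Therefore $I_5=2(\gamma+\varepsilon)\,\mathrm{tr}\,a_\delta^{\A,\B}\le C(\gamma+\varepsilon)\delta^{2-\sigma}$ by \ref{A3} and \ref{A5}. The crucial observation is that $I_5$ carries \emph{no} coupling with $|x_0-y_0|$, $|t_0-s_0|$, $\Delta t$ or $\Gamma(\sigma,\delta)$; such couplings would inevitably appear if $a_\delta^{\A,\B}$ varied in $x$ or $t$, and their absence is exactly why the sharper rate holds only in the $(x,t)$-independent case.

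Finally I would assemble the bound on $\mu$ as in Section \ref{sec:dbl}: $I_1,I_2,I_3,I_4$ give the right-hand side of \eqref{final3} (with $q=\tilde q=1$ when $K(u_0)<\infty$, and $q=\tfrac\sigma{2\sigma-1},\tilde q=\tfrac1{2\sigma-1}$ otherwise for $\sigma\in(1,2)$; for $\sigma=1$ one first mollifies the initial data and runs the argument with $u_0^{\tilde\epsilon}$ as in the proof of Theorem \ref{main:thm} for $\sigma=1$, and the case $\sigma\in[0,1)$ is only simpler), while $I_5$ adds the single term $CT(\gamma+\varepsilon)\delta^{2-\sigma}$; after sending $\varepsilon\to0$,
\[
\mu\le C\Big(\tfrac1\gamma+\tfrac1{\eta^{\tilde q}}\Big)+CT\Big(\eta\Delta t+\tfrac1{\eta^{q}}+\tfrac1\gamma+\Gamma(\sigma,\delta)\gamma\Delta x+\gamma\delta^{2-\sigma}\Big).
\]
Choosing $\gamma$ to balance $\tfrac1\gamma$ against $\gamma(\Gamma(\sigma,\delta)\Delta x+\delta^{2-\sigma})$ and $\eta$ against $\Delta t$ exactly as in Section \ref{sec:dbl} — e.g. $\eta=((T\wedge1)\Delta t)^{-1/(1+\tilde q)}$, $\gamma=\big((T\wedge1)(\Gamma(\sigma,\delta)\Delta x+\delta^{2-\sigma})\big)^{-1/2}$ — the $\gamma$-contribution becomes $C(1+T)(T\wedge1)^{1/2}\big(\Gamma(\sigma,\delta)^{1/2}\Delta x^{1/2}+\delta^{1-\frac\sigma2}\big)$ and the $\eta$-contribution becomes $C(1+T)(T\wedge1)^{1/(2\sigma)}\Delta t^{1/(2\sigma)}$ (resp. $C(1+T)(T\wedge1)^{1/2}\Delta t^{1/2}$ when $K(u_0)<\infty$); since no Lemma \ref{lem_delta}-type term is added afterwards, the sum is exactly the bound of Theorem \ref{main:thm3}, and interchanging the roles of $u$ and $U_h$ gives the lower bound on $u-U_h$. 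The main obstacle is therefore purely the bookkeeping around $I_5$: verifying that replacing the small jumps by the vanishing local diffusion $\mathcal{L}^{\A,\B}_{\delta,h}$ costs only the single, decoupled term $C(\gamma+\varepsilon)\delta^{2-\sigma}$ — which is what happens precisely because, with $\eta^{\A,\B}$ independent of $(x,t)$, the coefficient $a_\delta^{\A,\B}$ is constant and the scheme is exact on the quadratic test functions used in the doubling argument.
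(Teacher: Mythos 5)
Your proposal is correct and follows essentially the same route as the paper: skip the truncation step of Section \ref{sec:red}, rerun the doubling argument of Section \ref{sec:dbl}, observe that the only new contribution is a term $I_5$ pairing $\mathcal{I}^{\A,\B}_{\delta}$ (applied to the test function) with $\mathcal{L}^{\A,\B}_{\delta,h}$ (applied to $\phi(\cdot,y_0)$), bound it by $C(\gamma+\varepsilon)\delta^{2-\sigma}$, and rebalance the choice of $\gamma$ to absorb the extra $\gamma\delta^{2-\sigma}$ term in \eqref{final3}. Your exactness-on-quadratics computation of $I_5$ is a mild refinement of the paper's cruder estimate (which simply bounds each of the two terms by $C\delta^{2-\sigma}\|D^2\phi\|_0$), and your choice $\gamma=\big((T\wedge1)(\Gamma(\sigma,\delta)\Delta x+\delta^{2-\sigma})\big)^{-1/2}$ is equivalent, up to constants, to the paper's minimum of the two corresponding quantities.
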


\begin{proof}[Outline of proof]
We follow the proof of Section \ref{sec:dbl} without doing the truncation
step in Section \ref{sec:red} first. The idea is to estimate
separately the terms $\mathcal{L}^{\A,\B}_{\delta}[\phi]$ and
$\mathcal{L}^{\A,\B}_{\delta, h}[\phi]$.  By the discussion above and
the definition of the test function $\phi$, both terms are bounded by the
vanishing viscosity like bound
$C(\gamma+\varepsilon)\delta^{2-\sigma}$, and in the proof this term
would appear as new term $I_5$ on the right hand side of
\eqref{diff}. Continuing the proof,  the bound on $\mu$ in \eqref{final3} will have this additional term i.e.
\begin{align*}
\mu &\leq C\bigg(\frac1\gamma+\frac1{\eta^{\tilde q}}\bigg)+ CT\bigg(\eta\Delta
  t+\frac1{\eta^q}+ \Gamma(\sigma,\delta)\gamma\Delta x +\gamma\delta^{2-\sigma}\bigg).
\end{align*}
To conclude the (same) error estimates, we now have to modify the
choice of $\gamma$ and take
$$\gamma=\min\Big(\big((T\wedge
1)\Gamma(\sigma,\delta)\Delta x\big)^{-\frac12}
,(T\wedge1)^{-\frac12}\delta^{-(1-\frac\sigma2)}\Big)$$
which  leads to the bound $\mu\leq \text{$\Delta t$-term}
+C(1+T)(T\wedge1)^{\frac12}\big(\Gamma(\sigma,\delta)^{\frac12}\Delta x^{\frac12}+\delta^{1-\frac\sigma2}\big)$.
\vspace*{1cm}
 \end{proof}
  \begin{rem}
   (a) If $\eta$ does not depend on $(x,t)$, then our approach gives error bounds for arbitrary monotone schemes that admit possibly suboptimal error expansion involving no higher order derivatives than order $2$. 
   
 \medskip  
 \noindent (b) If $\eta$ depends on $x$, then the results will not be
 so good. Redoing the proof outlined above, we have to replace
 \eqref{final3} by \eqref{final} or \eqref{final2} which contain an additional
$O\big(\Gamma(\sigma,\delta)\frac{1}{\gamma}\big)$ term.
 To get the final error bound, we now have to take a $\gamma$ that minimize
 $$\Gamma(\sigma,\delta) \Big( \frac{1}{\gamma}+ \gamma \Delta
 x\Big) + \gamma \delta^{2-\sigma}.$$
This leads to
 $\gamma = \min \Big( \Delta x^{-1/2},
 \frac{\Gamma(\sigma,\delta)^{1/2}}{\delta^{\frac{2-\sigma}{2}}} \Big)
 = \min(\Delta x^{-1/2}, \delta^{-1/2} )= \delta^{-1/2}$
 since $\Delta x \leq \delta<1$, and then
   $$\mu \leq \dots + C\Big(\delta^{1-\sigma}(\delta^{1/2} + \Delta
   x^{1/2} ) + \delta^{-1/2} \delta^{2-\sigma}\Big)= \dots + C \Big(
   \delta^{1-\sigma} \Delta x^{1/2} +
   \delta^{\frac{3}{2}-\sigma}\Big). $$ This error bound is worse than
   before, and only valid for $\sigma \leq \frac 32$. 
 
 \medskip
 \noindent (c) A possible way to obtain general (suboptimal) results
 when $\eta$ depends on $(x,t)$, is via continuous dependence results
 like in \cite{JKL08}. But now such results are also needed for the
 scheme. Obtaining such results can be challenging in general and will
 not be considered here.
 \end{rem}
 


\section*{Acknowledgments} E. R. Jakobsen is supported by the
Toppforsk (research excellence) project Waves and Nonlinear Phenomena
(WaNP), grant no. 250070 from the Research Council of Norway.  I. H. Biswas acknowledges the support received from INSA via {\it INSA Young Scientist Project}. In addition, Indranil Chowdhury is supported by the research fellowship of {\it Department of Atomic Energy, India}.



\end{document}